\def\@settitle{\begin{center}%
  \baselineskip14\p@\relax
  \normalfont\LARGE\bfseries
  \@title
  \ifx\@subtitle\@empty\else
     \\[1ex] 
     \normalsize\mdseries\@subtitle
  \fi
 \ifx\@didication\@empty\else
     \\[2ex] 
     \large\mdseries\it\@dedication
  \fi
  \end{center}%
}
\def\subtitle#1{\gdef\@subtitle{#1}}
\def\@subtitle{}
\def\dedication#1{\gdef\@dedication{#1}}
\def\@dedication{}
\def\vmargin@#1#2#3{
\setbox0=\hbox{#3}%
\rule[#1]{0pt}{\ht0}%
\lower\dp0\hbox{\rule[-#2]{0pt}{\dp0}}%
\box0%
}
\def\vmargin#1#2#3{
\mathchoice
{\vmargin@{#1}{#2}{$\displaystyle #3$}}
{\vmargin@{#1}{#2}{$\textstyle #3$}}
{\vmargin@{#1}{#2}{$\scriptstyle #3$}}
{\vmargin@{#1}{#2}{$\scriptscriptstyle #3$}}
}
\renewcommand{\section}{\@startsection
{section}{1}{0mm}{5mm}{2mm}{\raggedright\bfseries}}
\newtheorem{Corollary}[equation]{Corollary}
\newtheorem{Proposition}[equation]{Proposition}
\theoremstyle{definition}
\newtheorem{Definition}[equation]{Definition}
\newtheorem{Example}[equation]{Example}
\newtheorem{Remark}[equation]{Remark}
\newtheorem{Note}[equation]{Note}
\begin{document}
\newcommand{\ig}[1]{\includegraphics[width=4cm]{#1}}
\newcommand{\cSap}[1]{{\cS_{\rm ap}({#1})}}
\newcommand{\tvect}[3]{%
   \ensuremath{\Bigl(\negthinspace\begin{smallmatrix}#1\\#2\\#3\end{smallmatrix}\Bigr)}}
\def\sI{{\mathsf I}}
\def\sJ{{\mathsf J}}
\def\sS{{\mathsf S}}
\def\N{{\mathbb N}}
\def\C{{\mathbb C}}
\def\Z{{\mathbb Z}}
\def\R{{\mathbb R}}
\def\Q{{\mathbb Q}}
\def\cD{{\mathcal{D}}}
\def\cM{{\mathcal{M}}}
\def\cS{{\mathcal{S}}}
\def\cH{{\mathcal{H}}}
\def\cM{{\mathcal{M}}}
\def\cR{{\mathcal{R}}}
\def\cT{{\mathcal{T}}}
\def\bp{{\mathbf p}}
\def\bq{{\mathbf q}}
\def\bP{{\mathbf P}}
\def\bG{{\mathbf G}}
\def\Gal{{\mathrm{Gal}}}
\def\et{\text{\'et}}
\def\ab{\mathrm{ab}}
\def\proP{{\text{pro-}p}}
\def\padic{{p\mathchar`-\mathrm{adic}}}
\def\la{\langle}
\def\ra{\rangle}
\def\scM{\mathscr{M}}
\def\lala{\la\!\la}
\def\rara{\ra\!\ra}
\def\ttx{{\mathtt{x}}}
\def\tty{{\mathtt{y}}}
\def\ttz{{\mathtt{z}}}
\def\bkappa{{\boldsymbol \kappa}}
\def\scLi{{\mathscr{L}i}}
\def\sLL{{\mathsf{L}}}
\def\GL{\mathrm{GL}}
\def\Coker{\mathrm{Coker}}
\def\area{\mathrm{area}}
\def\Ker{\mathrm{Ker}}
\def\CHplus{\underset{\mathsf{CH}}{\oplus}}
\def\check{{\clubsuit}}
\def\kaitobox#1#2#3{\fbox{\rule[#1]{0pt}{#2}\hspace{#3}}\ }
\def\vru{\,\vrule\,}
\newcommand*{\longhookrightarrow}{\ensuremath{\lhook\joinrel\relbar\joinrel\rightarrow}}
\newcommand{\hooklongrightarrow}{\lhook\joinrel\longrightarrow}
\def\nyoroto{{\rightsquigarrow}}
\newcommand{\pathto}[3]{#1\overset{#2}{\dashto} #3}
\newcommand{\pathtoD}[3]{#1\overset{#2}{-\dashto} #3}
\def\dashto{{\,\!\dasharrow\!\,}}
\def\ovec#1{\overrightarrow{#1}}
\def\isom{\,{\overset \sim \to  }\,}
\def\GT{{\widehat{GT}}}
\def\bfeta{{\boldsymbol \eta}}
\def\brho{{\boldsymbol \rho}}
\def\bomega{{\boldsymbol \omega}}
\def\sha{\scalebox{0.6}[0.8]{\rotatebox[origin=c]{-90}{$\exists$}}}
\def\upin{\scalebox{1.0}[1.0]{\rotatebox[origin=c]{90}{$\in$}}}
\def\downin{\scalebox{1.0}[1.0]{\rotatebox[origin=c]{-90}{$\in$}}}
\def\torusA{{\epsfxsize=0.7truecm\epsfbox{torus1.eps}}}
\def\torusB{{\epsfxsize=0.5truecm\epsfbox{torus2.eps}}}

\def\bbS{{\mathbb S}}
\def\bbT{{\mathbb T}}
\def\bS{{\mathbf S}}
\def\bT{{\mathbf T}}
\def\idp{{\rm idp}}


\title{A family of geometric operators on triangles \\
with two complex variables}
\author{Hiroaki Nakamura}
\address{
Department of Mathematics, 
Graduate School of Science, 
Osaka University, 
Toyonaka, Osaka 560-0043, Japan}
\email{nakamura@math.sci.osaka-u.ac.jp}

\author{Hiroyuki Ogawa}
\address{
Department of Mathematics, 
Graduate School of Science, 
Osaka University, 
Toyonaka, Osaka 560-0043, Japan}
\email{ogawa@math.sci.osaka-u.ac.jp}

\subjclass[2010]{51M15; 51N20, 12F05, 43A32}

\begin{abstract}
Given a plane triangle $\Delta$, one can construct a new triangle $\Delta'$
whose vertices are intersections of two cevian triples of $\Delta$. 
We extend the family of operators $\Delta\mapsto\Delta'$
by complexifying the defining two cevian parameters and study its
rich structure from arithmetic-geometric viewpoints. 
We also find another useful parametrization of the operator family
via finite Fourier analysis and apply it to investigate
area-preserving operators on triangles.
\end{abstract}

\maketitle

\markboth{H.Nakamura, H.Ogawa}
{A family of geometric operators on triangles
with two complex variables}



\section{Introduction}

In \cite{NO03}, we studied a certain operator $\cS_{p,q}$ on plane 
triangles with two {\it real} parameters $p,q$ ($pq\ne 1, (p,q)\ne(\frac12,\frac12))$
which maps any triangle $\Delta ABC$ to a new triangle $\Delta A'B'C'$
such that the vertex $A'$ (resp. $B'$, $C'$) is obtained as the intersection
of the $(p:1-p)$-cevian from the vertex $B$ 
(resp. $C$, $A$) and 
the $(1-q:q)$-cevian from the vertex $A$
(resp. $B$, $C$).

The aim of this paper is to extend $\cS_{p,q}$ `algebraically' for {\it complex} parameters
$p$, $q$ and to study basic properties of the total collection of those operators. 
Although our extension differs from
geometrically obvious one via generalized cevians,
we still can give geometrical interpretation 
of $\cS_{p,q}$ for complex $p,q$ (in Remark \ref{rem2.6}).

After establishing setup and basic properties in \S2-3, 
we introduce in \S 4,
a {\it moduli parameter} $\xi_{p,q}$ which controls the effect of $\cS_{p,q}$ 
on the shapes (similarity classes) of triangles.
In \S 5, it turns out that finite Fourier analysis enables us to find natural 
separation of numerator and denominator of $\xi_{p,q}$ in the form
$\xi_{p,q}=\eta_{p,q}/\eta_{p,q}'$, which leads us to introduce 
a more natural family
of operators $\cS[\eta,\eta']$ defined all over the pairs $(\eta,\eta')\in \C^2$.
We give explicit expression of the original parameters $p$, $q$ as rational functions
in $\eta,\eta'$. Finally in \S 6, 
we study ``area-preserving normal operators'' parametrized by
a standard torus $S^1\times S^1$ in $\C^2$

Throughout this paper, we use the notations: 
$\rho:=e^{2 \pi i/6}$, $\omega:=e^{2 \pi i/3}$.

\section{Operators $\cS_{p,q}$}

A {\it triangle} is a set $\{a,b,c\}$ of three points (vertices) 
on the complex plane $\C$. 
We will consider any ordered triple $(a,b,c)\in\C^3$ with distinct coordinates
as a {\it triangle triple} representing $\{a,b,c\}$.
It is said to be {\it degenerate} if $a,b,c$ are collinear. 
A non-degenerate triangle triple $(a,b,c)$ is said to be  {\it positive}
(resp. {\it negative})  
if it represents a triangle $\{a,b,c\}$ with $(a-b)/(c-b)$ 
belonging to the
upper half plane $\cH^+=\{z\in\C\mid Im(z)>0\}$
(resp. the lower half plane $\cH^-=\{z\in\C\mid Im(z)<0\}$).

In this paper, we study operations $\cS_{p,q}$ ($p,q\in\C$, $pq\ne 1$)
on the triangle triples defined by
\begin{equation} \label{eq1.1}
\cS_{p,q}(a,b,c)=(a',b',c'):
\begin{cases}
a'=\hspace{-4mm}& \ \alpha_{p,q}\, a + \beta_{p,q}\, b +\gamma_{p,q} \, c ;\\
b'=\hspace{-4mm}& \ \alpha_{p,q}\, b + \beta_{p,q}\, c +\gamma_{p,q} \, a ;\\
c'=\hspace{-4mm}& \ \alpha_{p,q}\, c + \beta_{p,q}\, a +\gamma_{p,q} \, b,
\end{cases}
\end{equation}
where,  
$$
\alpha_{p,q}=\frac{p(1-q)}{1-pq},
\  
\beta_{p,q}=\frac{q(1-p)}{1-pq},
\
\gamma_{p,q}=\frac{(1-p)(1-q)}{1-pq}.
$$
We will often identify $\cS_{p,q}$ with its obvious extension to the
linear operator on $\C^3$ defined by the same formula (\ref{eq1.1}). 
The above operator $\cS_{p,q}$ is designed to preserve centroids of triangles
due to the condition $\alpha_{p,q}+\beta_{p,q}+\gamma_{p,q}=1$
(cf.\,Proposition \ref{cor3.4}).

For real parameters $(p,q)$, the operator $\cS_{p,q}$ was studied 
in \cite{NO03} where $a',b',c'$ are given as the 
intersection points of certain cevians 
of $\{a,b,c\}$ determined by $(p,q)$. 
In \cite{Ka15}, T.Kanesaka
inspected $\cS_{p,q}$ extended to complex variables $(p,q)$ 
in the above form.
In Remark \ref{rem2.6}, we will discuss a geometrical interpretation 
of $\cS_{p,q}$ for complex $p,q$ which uses more tools 
than cevians.

%
%

\begin{Proposition} \label{prop1.1}
Let $p,q\in\C$ satisfy $pq\ne 1$. The following two conditions are equivalent.

\begin{enumerate}[label=(\roman*),font=\upshape]
\item
$\cS_{p,q}$ maps every positive triangle triple to a positive triangle triple. 
\item
$p=q\ne \frac{1}{2}$ or $\frac{(p-1)(2q-1)}{p-q}\in\R\cup\cH^+$.
\end{enumerate}
\end{Proposition}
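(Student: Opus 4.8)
The plan is to reduce the statement to a computation about when a single Möbius-type transformation on the upper half plane maps $\cH^+$ into $\cH^+\cup\R$. First I would exploit $SL_2$-invariance: positivity of a triangle triple $(a,b,c)$ depends only on the cross-ratio-like quantity $z=(a-b)/(c-b)\in\cH^+$, and since $\cS_{p,q}$ is linear, centroid-preserving, and equivariant under the affine group (translations and complex scalings act by the same operation on input and output, as follows from the cyclic form of \eqref{eq1.1}), the induced map on shapes is a well-defined map $z\mapsto w(z)$ on $\cH^+$. Concretely, writing $a'-b'=\alpha_{p,q}(a-b)+\beta_{p,q}(b-c)+\gamma_{p,q}(c-a)$ and similarly for $c'-b'$, one sees that $w=w(z)$ is a fractional linear function of $z$ with coefficients that are polynomials in $\alpha_{p,q},\beta_{p,q},\gamma_{p,q}$, hence rational in $p,q$. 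Condition (i) then says exactly that this Möbius map sends $\cH^+$ into $\overline{\cH^+}=\cH^+\cup\R$.

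Next I would recall the elementary fact that a non-constant real Möbius transformation $M(z)=\frac{\alpha z+\beta}{\gamma z+\delta}$ (with complex coefficients) maps $\cH^+$ into $\cH^+\cup\R$ if and only if either $M$ is the restriction of an element of $PSL_2(\R)$ (equivalently, after normalizing, the coefficients are proportional to real numbers and the determinant is positive), or $M$ is constant with value in $\overline{\cH^+}$; if $M$ is constant the image degenerates, and I expect that case to correspond to $p=q$. So the strategy is: (a) identify the coefficients of $w(z)$ explicitly; (b) detect the degenerate (constant-map) locus — this should be where the numerator and denominator of $w$ are proportional, and I anticipate it factors through $p=q$, giving the first alternative $p=q\ne\tfrac12$ (the exclusion $p=q=\tfrac12$ being the forbidden parameter already noted in the introduction, where $\cS_{p,q}$ collapses to the centroid); (c) in the non-degenerate case, impose that the coefficients become real after a common rescaling and that the resulting determinant is $\ge 0$, and check that this single condition simplifies to $\frac{(p-1)(2q-1)}{p-q}\in\R\cup\cH^+$.

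The main obstacle will be step (c): organizing the algebra so that the ``coefficients proportional to reals, determinant $>0$'' condition collapses cleanly to the stated quantity $\lambda:=\frac{(p-1)(2q-1)}{p-q}$. I would approach this by first computing $w(z)$ in terms of $p,q$ directly (substituting the formulas for $\alpha_{p,q},\beta_{p,q},\gamma_{p,q}$ and clearing the common factor $1-pq$), then normalizing so one coefficient is $1$; the ratios of the remaining coefficients are the genuine invariants, and a short manipulation should express both the ``all real'' condition and the positive-determinant condition through $\lambda$ alone. A useful sanity check along the way: when $p,q$ are real the map $\cS_{p,q}$ visibly preserves the real affine structure, so positivity should be preserved exactly when an orientation-type determinant is positive, and plugging real $p,q$ into $\lambda$ should land in $\R$, consistent with (ii). I would also double-check the boundary behaviour — triples mapping to degenerate (collinear) triangles correspond to $w(z)\in\R$, which is why $\R$ is included rather than excluded in (ii) — and verify that the borderline case $\lambda\in\R$ with the determinant vanishing indeed still maps $\cH^+$ into $\overline{\cH^+}$ (landing in $\R$), so it belongs in the ``yes'' set.
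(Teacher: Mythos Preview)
Your reduction to the shape parameter $z=(a-b)/(c-b)$ and the induced M\"obius map is exactly how the paper begins, but step~(c) rests on a false lemma. You claim that a non-constant M\"obius transformation $M$ sends $\cH^+$ into $\cH^+\cup\R$ if and only if $M$ lies in $PSL_2(\R)$. That criterion characterizes the \emph{bijective} self-maps of $\cH^+$, not maps \emph{into} $\cH^+$. For instance $z\mapsto z+i$ sends $\cH^+$ into $\{\mathrm{Im}\,z>1\}\subset\cH^+$ but has no real matrix representative. More to the point, for a generic $t\in\cH^+\setminus\R$ the map
\[
F(z)=\frac{(t-1)z-t}{tz-1}
\]
does send $\cH^+$ into $\cH^+$, yet its coefficient vector $(t-1,-t,t,-1)$ is not proportional to a real vector. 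So the condition you plan to impose (``coefficients real after rescaling, determinant positive'') is strictly stronger than condition~(i), and your algebra in step~(c) cannot close up to the stated answer.

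What the paper exploits instead is a special feature of this particular $F$: it fixes $\rho=e^{\pi i/3}$ (and also $\bar\rho$), as one checks directly from $F(z)=z\Leftrightarrow z^2-z+1=0$. Conjugating $\rho\mapsto 0$, $\bar\rho\mapsto\infty$ turns $F$ into a pure scaling $w\mapsto\lambda w$ of the unit disk, and then $F(\cH^+)\subset\cH^+$ reduces to $|\lambda|\le 1$, which is equivalent to the single pointwise check $F(0)=t\in\R\cup\cH^+$. That fixed-point observation is the missing idea; without it one really does need the full (and messier) criterion for a general complex M\"obius map to contract $\cH^+$. Two smaller corrections: the case $p=q$ does \emph{not} make $F$ constant --- one gets $F(z)=(z-1)/z$, a genuine automorphism of $\cH^+$ --- and the constant case is $t=\rho$, where $F\equiv\rho$. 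Also, the ``$\R$'' appearing in condition~(ii) is a constraint on the parameter $t$, not on the image $w(z)$; condition~(i) requires $w(z)\in\cH^+$ strictly (the output must be a positive, hence non-degenerate, triple), so your last paragraph's reading of the boundary case is off.
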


\begin{proof}
Given a positive triangle triple $(a,b,c)$, let 
$\cS_{p,q}(a,b,c)=(a',b',c')$. 
For $z=(a-b)/(c-b)\in \cH^+$, one computes: 
$$
F(z)=\frac{a'-b'}{c'-b'}=\frac{(1-q)(2p-1)z+(1-p)(1-2q)}{(1-p)(2q-1)z+(p-q)}.
$$
The condition (i) is equivalent to that the mapping $z\mapsto F(z)$ defines 
a linear fractional transformation sending $\cH^+$ into $\cH^+$.
When $p=q$, $(p,q)\ne (\frac{1}{2},\frac{1}{2})$ rephrases
non-degeneracy of $\{a',b',c'\}$, in which case
$F(z)=\frac{z-1}{z}$ maps $\cH^+$ onto $\cH^+$ certainly.
When $p\ne q$, writing $t:=(p-1)(2q-1)/(p-q)$, one finds
$$
F(z)=\frac{(t-1)z-t}{tz-1}
$$
which represents a non-degenerate linear fractional map 
with  $F(\rho)=\rho$ if and only if $t\ne \rho$.
In this case, the condition (i) is equivalent to $F(0)=t\in\R\cup\cH^+$. When $t=\rho$, constantly $F(z)=\rho$ 
so that $\cS_{p,q}$ maps every positive triangle triple 
to a positive equilateral triangle triple. 
\end{proof}

\begin{Definition} \label{def1.2}
The operator $\cS_{p,q}$ will be called {\it regular} 
if the condition of Proposition \ref{prop1.1}
is satisfied. 
Moreover, we shall call $\cS_{p,q}$ to be {\it normal}, if 
it is regular with 
$(p-1)(2q-1)/(p-q)\in \R$.
\end{Definition}

\begin{Example}\label{classical_nest}
Let $\Delta=(a,b,c)$ denote a triangle triple.
In \cite{NO03}, we presented two major examples $\cS_{0,q}$ and $\cS_{1-q,q}$ for $q\in\R$.
$\cS_{0,q}(\Delta)$ is a triangle triple whose vertices are the ``$(1-q):q$''-division points 
of sides of $\Delta$, while $\cS_{1-q,q}(\Delta)$ is a triangle triple obtained from 
the ``$(1-q):q$''-cevians. In \cite{H09}, these are studied under the name of 
$s$-medial and $s$-Routh triangles $\cM_s(\Delta)$, $\cR_s(\Delta)$ 
which, in our notations, correspond to $\cS_{0,1-s}(\Delta)$ and $\cS_{s,1-s}(\Delta)$ respectively.
There is another interesting construction called the $s$-median triangle of $\Delta$
(written $\cH_s(\Delta)$ in \cite{H09}).
In \cite{NO2}, we discuss a generalization of $\cH_s(\Delta)$ from the viewpoint
of the present paper. 
\end{Example}

\begin{Example} \label{NapoleonTh}
The famous Napoleon's theorem tells that, for any triangle $\Delta$, 
the centers of the three equilateral triangles sharing one of the sides of $\Delta$
form an equilateral triangle. 
This means, for $q={\frac{1-\omega^2}{3}}=\frac{1}{2}+\frac{\sqrt{3}i}{6} $, 
$\cS_{0,q}(\Delta)$ is equilateral for 
every triangle triple $\Delta$. 
This $\cS_{0,q}$ gives a regular but a non-normal operator which motivates us
to investigate complex parameters $(p,q)$.
\end{Example}

\section{Matrix analysis of $\cS_{p,q}$}

In this section, we summarize basic facts on $\cS_{p,q}$ from the 3 by 3 matrix point of view.
Let 
$$
\sI:=\begin{pmatrix} 1 & 0 & 0 \\ 0 & 1& 0\\ 0 & 0 & 1 \end{pmatrix}, \quad
\sJ:=\begin{pmatrix} 0 & 1& 0 \\ 0 & 0 & 1 \\ 1 & 0 & 0  \end{pmatrix}
$$
and define 
$$
\sS_{p,q}:=\alpha_{p,q}\sI + \beta_{p,q} \sJ +\gamma_{p,q} \sJ^2.
$$
{}From (\ref{eq1.1}) follows that  
\begin{equation}
\cS_{p,q}(a,b,c)=\sS_{p,q} \tvect{a}{b}{c}
\end{equation}
where triangle triples $(a,b,c)$ is regarded as column vectors in $\C^3$.
It is natural to identify $\cS_{p,q}$ with $\sS_{p,q}\in \GL_3(\C)$
acting on the column vector space $\C^3$ on the left.
It is easy to see that
\begin{equation} \label{cyclic_perm}
\sS_{0,0}=\sJ^2;\quad
\sS_{1,q}=\sI \quad (q\ne 1); \quad 
\sS_{p,1}=\sJ \quad (p\ne 1)
\end{equation}
and that $\sS_{1,1}$ is essentially undefined ($p=q=1$ is a point of indeterminacy).
Observe that the operators $\cS_{0,0}$, $\cS_{1,q}$ $(q\ne 1)$ induce 
cyclic permutations on the vertices of triangle triples.
\begin{Definition}
We call $\cS_{0,0}$, $\cS_{p,1}$ $(p\ne 1)$ {\it cyclic permutation operators},
and call $\cS_{1,q}$ $(q\ne 1)$ {\it identity operators}.
\end{Definition}

\begin{Example}
On the other hand, there are no $\cS_{p,q}$ that universally induce transpositions 
on vertices of triangle triples. However, given an individual triangle triple $(a,b,c)$,
one finds 
\begin{align*}
\cS_{p,q}(a,b,c)& =(a,c,b) \quad\text{for  } (p,q)=\left(\frac{c-a}{b-a},\frac{a-c}{b-c}\right), \\
\cS_{p,q}(a,b,c)&=(b,a,c) \quad\text{for  }  (p,q)=\left(\frac{b-c}{a-c},\frac{c-b}{a-b}\right), \\
\cS_{p,q}(a,b,c)&=(c,b,a) \quad\text{for  }  (p,q)=\left(\frac{a-b}{c-b},\frac{b-a}{c-a}\right).
\end{align*} 
These are called {\it reflection operators} for the triangle triple $\Delta=(a,b,c)$ and form 
first basic examples of non-regular $\cS_{p,q}$.
It is a good exercise to show that $\cS_{p,q}$ is a reflection operator for some $\Delta$
if and only if 
the parameters $p$, $q\in\C$ satisfy 
$pq\neq1$ and $pq=p+q$. 
(Hint: We have $\cS_{p,q}(a,b,c)=(a,c,b)$ if $a=(1-q)c+qb$.)
\end{Example}

Below we first prepare a notion of weighted mean (average) of 
two matrices (also of two vectors), 
which will be used in the subsequent Remark.

\begin{Definition}
Given a complex number $r\in\C$ and two matrices $A,B$ of a same size, 
we define the {\it $r$-weighted mean}, 
or for short, {\it $r$-average} of $A$ and $B$, by
$\mu_r(A,B):=(1-r)A+rB$.
\end{Definition}

\begin{Remark} \label{rem2.6}
({\it Geometrical interpretation}.)
In \cite[p.76, Remark]{Nic13}, G.Nicollier gave a useful 
geometrical interpretation of the operator $\cS_{p,q}$ 
in terms of convolution with generalized Kiepert triangle
(\cite[p.69]{Nic13}. See also proof of Proposition \ref{prop4.1} below.)

Here, we give another interpretation in terms of 
consecutive operations of weighted arithmetic means of triangles.
It follows from a simple matrix calculation that
\begin{equation}
\sS_{p,q}=\mu_{\frac{1-q}{1-pq}}(\sJ, \mu_p(\sJ^2,\sI))
=\mu_{\frac{1-p}{1-pq}}(\sI, \mu_q(\sJ^2,\sJ)).
\end{equation}
Geometrical interpretation of the above identity applied to a triangle triple 
$\Delta=(a,b,c)$ may be phrased as follows.
First, form the $p$-average $\Delta'=\mu_p(\sJ^2\Delta,\Delta)$ 
of `label-permuted' triangles $\sJ^2\Delta=(c,a,b)$ and $\Delta=(a,b,c)$.
Then, one obtains $\cS_{p,q}(\Delta)$ as the $\frac{1-q}{1-pq}$-average 
of $\sJ\Delta=(b,c,a)$ and $\Delta'$.
In another way, form the $q$-average $\Delta''=\mu_q(\sJ^2\Delta,\sJ\Delta)$ 
of $\sJ^2\Delta=(c,a,b)$ and $\sJ\Delta=(b,c,a)$. 
Then, $\cS_{p,q}(\Delta)$ is obtained as the $\frac{1-p}{1-pq}$-average 
of $\Delta=(a,b,c)$ and $\Delta''$.

\end{Remark}

\begin{Note}  \label{note2.7}
Generically, it holds that
$$
\sS_{p_2,q_2}\sS_{p_1,q_1}=\sS_{p_1,q_1}\sS_{p_2,q_2}=\sS_{p,q}
$$
with 
$$
p=\frac{\lambda_2+2\lambda_3}{\lambda_1}, \qquad q=\frac{\lambda_1-2\lambda_3}{\lambda_2},
$$
where
\begin{align*}
\lambda_1&=1-(1-2(1-p_1)(1-p_2))(1-2(1-q_1)(1-q_2)), \\
\lambda_2&=1-2(1-p_1 p_2)-(1-2(1-p_1)q_2)(1-2(1-p_2)q_1), \\ 
\lambda_3&=(1-p_1q_1)(1-p_2 q_2). 
\end{align*}
But the collection $\bbS_{p,q}:=\{\sS_{p,q}\mid p,q\in\C, pq\ne 1\}$ is not 
closed under multiplication in $\GL_3(\C)$. 
For example, $(p_1,q_1)=(\frac13,\frac14)$, $(p_2,q_2)=(-\frac78,-\frac19)$
implies $\lambda_1=\lambda_2+2\lambda_3=0$ which shows essential 
indeterminacy of $p$ in this special case. 
Meanwhile, if  $(p_1,q_1)=(\frac13,\frac14)$, $(p_2,q_2)=(-\frac27,\frac15)$, then
$\lambda_1=0$ while $\lambda_2+2\lambda_3\ne 0$. 
In this case, one would be able to regard $p=\infty$ with 
suitable interpretation about $\cS_{p_1,q_1}\cS_{p_2,q_2}=\cS_{\infty,q}$.
Later in \S 5, we will remedy this inconvenience by extending the set
$\bbS_{p,q}$ to a multiplicative submonoid of $M_3(\C)$. 
Cf. Remark \ref{rem4.9}.
\end{Note}


\section{Actions on the moduli disk $\cD$}

In \cite{NO03}, we introduced a moduli space of the similarity
classes of triangles as 
the Poincar\'e disk $\cD=\{z\in\C;|z|<1\}$ 
by associating with any triangle $\Delta$
the well-defined modulus
\begin{equation} \label{eq2.1}
\phi(\Delta):=\left(
\frac{a+b\omega+c\omega^2} {a+b\omega^2+c\omega}
\right)^3
\in\cD,
\end{equation}
when $\Delta=\{a,b,c\}$ is labeled by a positive triangle triple $(a,b,c)$.
Now, let us introduce two quantities $t_{p,q}$ and $\xi_{p,q}$ by 
\begin{equation}\label{eq2.2}
t_{p,q}:=\frac{(p-1)(2q-1)}{p-q},
\quad \xi_{p,q}:=\frac{1+t_{p,q}\omega}{\,1+t_{p,q}\omega^{2}}=
{\frac{(p-q)+(p-1)(2q-1)\,\omega}{(p-q)+(p-1)(2q-1)\,\omega^2}}
.
\end{equation}
It is not difficult to see that $\cS_{p,q}$ is regular (Definition \ref{def1.2})
if and only if $|\xi_{p,q}|\le 1$.
Indeed, each regular operation $\cS_{p,q}$
induces a moduli action $T_{p,q}:\cD\to\cD$ satisfying 
$T_{p,q}(\phi(\Delta))=\phi(\cS_{p,q}(\Delta))$ 
for all triangles $\Delta$ given by the formula
\begin{equation} \label{eq2.3}
T_{p,q}(z)=\xi_{p,q}^3\!\cdot\! z \ .
\end{equation}
Thus, the moduli action $T_{p,q}$ induced from a regular operator $\cS_{p,q}$ 
maps the moduli disk
$\cD=\{z\in\C;|z|<1\}$ into a smaller or the same disk
$\{z\in\C;|z|<|\xi_{p,q}|^3\}$.
In particular, a normal operator $\cS_{p,q}$ gives rise to $T_{p,q}$
that rotates $\cD$ with angle $Arg(\xi_{p,q}^3)$ in the anti-clockwise way.
It is also reasonable to call such $T_{p,q}$ {\it regular} 
(resp. {\it normal}) when
$Im(t_{p,q})\ge 0\Leftrightarrow |\xi_{p,q}|\le 1$
(resp. $Im(t_{p,q})= 0\Leftrightarrow |\xi_{p,q}|= 1$).

Here, we summarize basic properties of $\cS_{p,q}$, some of which are consequences of 
(\ref{eq2.3}).
Write $\overline{\Delta}$ for the complex conjugate of a triangle triple $\Delta$.

\begin{Proposition} 
\label{cor3.4}
\begin{enumerate}[label=(\roman*),font=\upshape]
\item
If $\cS_{p,q}(\Delta)={\Delta'}$ then $\cS_{\bar p,\bar q}(\overline{\Delta})=\overline{\Delta'}$, 
and vice versa holds.
\item
The operator $\cS_{p,q}$ preserves centroids of triangles, and 
maps equilateral triangle triples to 
equilateral triangle triples.
\item
Let $\Delta$ be an arbitrary triangle triple, and $r,q\in\C$, $r\ne 2$, $q\ne -1$.
Then,  all of $\cS_{\frac{1}{2},r}(\Delta)$, $\cS_{r,\frac{1}{2}}(\Delta)$ 
and $\cS_{q,q}(\Delta)$ are similar to $\Delta$. 
\end{enumerate}
\end{Proposition}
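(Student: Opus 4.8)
The plan is to derive all three similarity statements as consequences of the moduli formula (\ref{eq2.3}), namely $T_{p,q}(z)=\xi_{p,q}^3 z$, together with the observation that a triangle triple $\Delta'$ is similar to $\Delta$ precisely when $\phi(\Delta')=\phi(\Delta)$ (for non-degenerate $\Delta$), which forces $\xi_{p,q}^3=1$, i.e. $\xi_{p,q}\in\{1,\omega,\omega^2\}$. So the first step is to compute $\xi_{p,q}$ in each of the three cases using the closed form $\xi_{p,q}=\frac{(p-q)+(p-1)(2q-1)\omega}{(p-q)+(p-1)(2q-1)\omega^2}$ from (\ref{eq2.2}).

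For $\cS_{1/2,r}$ we have $p=\tfrac12$, so $p-q=\tfrac12-r$ and $(p-1)(2q-1)=(-\tfrac12)(2r-1)=\tfrac12-r$; hence numerator and denominator of $\xi_{1/2,r}$ are both $(\tfrac12-r)(1+\omega)$ and $(\tfrac12-r)(1+\omega^2)$ respectively — wait, more simply $\xi_{1/2,r}=\frac{1+\omega}{1+\omega^2}$, a constant, which equals $\rho^2\cdot\overline{\rho}^{-2}$... concretely $1+\omega=-\omega^2$ and $1+\omega^2=-\omega$, so $\xi_{1/2,r}=\omega$, and $\xi_{1/2,r}^3=1$. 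The hypothesis $r\neq 2$ is exactly what keeps $\tfrac12-r$ (hence $t_{p,q}$) finite and the operator well-defined/non-degenerate. For $\cS_{r,1/2}$, put $q=\tfrac12$: then $2q-1=0$, so $(p-1)(2q-1)=0$ and $\xi_{r,1/2}=\frac{p-q}{p-q}=1$, giving $T_{r,1/2}=\mathrm{id}$ on $\cD$; here $p-q=r-\tfrac12\neq 0$ is automatic unless $r=\tfrac12$, and one should note $r\neq 2$ should be read against the stated exclusion — actually for this case the relevant condition is that $t_{p,q}$ is defined, i.e. $p\neq q$, covered by the general setup; the excerpt's ``$r\neq2$'' is presumably a uniform hypothesis across the three items. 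For $\cS_{q,q}$ with $q\ne -1$, we have $p=q$, so $t_{p,q}$ has vanishing denominator $p-q=0$; this is the degenerate case of the formula and must instead be handled directly from Proposition \ref{prop1.1}'s computation, where for $p=q$ one gets $F(z)=\frac{z-1}{z}$ independent of $\Delta$. One then checks that $F$ is an order-$3$ Möbius map (indeed $F^3=\mathrm{id}$) fixing $\rho$, so it corresponds to $\xi^3=1$ as well; equivalently the matrix $\sS_{q,q}$ applied to the ``moduli eigenvector'' $(1,\omega,\omega^2)$ (and its conjugate) scales both by complex units with ratio a cube root of unity.

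The cleanest route is probably to work directly at the level of the eigen-decomposition of $\sS_{p,q}$ introduced in \S3: since $\sS_{p,q}=\alpha\sI+\beta\sJ+\gamma\sJ^2$ is a polynomial in the cyclic matrix $\sJ$, it is simultaneously diagonalized with $\sJ$, whose eigenvectors are $v_0=(1,1,1)$, $v_1=(1,\omega,\omega^2)$, $v_2=(1,\omega^2,\omega)$ with eigenvalues $1,\omega,\omega^2$. Thus $\sS_{p,q}v_1=(\alpha+\beta\omega+\gamma\omega^2)v_1$ and $\sS_{p,q}v_2=(\alpha+\beta\omega^2+\gamma\omega)v_2$, and $\phi(\Delta)$ is exactly the cube of the ratio of the $v_1$- and $v_2$-coordinates of $(a,b,c)$; hence $\phi(\cS_{p,q}(\Delta))=\bigl(\frac{\alpha+\beta\omega+\gamma\omega^2}{\alpha+\beta\omega^2+\gamma\omega}\bigr)^3\phi(\Delta)$, identifying $\xi_{p,q}=\frac{\alpha+\beta\omega+\gamma\omega^2}{\alpha+\beta\omega^2+\gamma\omega}$. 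Plugging in $\alpha_{p,q},\beta_{p,q},\gamma_{p,q}$ reduces the three claims to the elementary identities $\alpha+\beta\omega+\gamma\omega^2=\omega^{\pm1}(\alpha+\beta\omega^2+\gamma\omega)$ in the cases $p=\tfrac12$, $q=\tfrac12$, $p=q$ — each a one-line substitution, with the common denominator $1-pq$ cancelling. The main obstacle, such as it is, is purely bookkeeping: handling the $p=q$ case without dividing by $p-q$ (so one should avoid the $t_{p,q}$ form there and argue via the matrix eigenvalues or via $F(z)=\tfrac{z-1}{z}$ directly), and making sure the excluded values $r\neq 2$, $q\neq-1$ are precisely those guaranteeing $\sS_{p,q}\in\GL_3(\C)$ and the image triple non-degenerate, so that ``similar'' is meaningful. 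I expect no genuine difficulty beyond this.
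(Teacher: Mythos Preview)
Your proposal addresses only part (iii); parts (i) and (ii) are not treated at all. In the paper both are one-liners: (i) is immediate because $\alpha_{p,q},\beta_{p,q},\gamma_{p,q}$ are rational functions of $p,q$ with real coefficients, and the centroid claim in (ii) is just $\alpha_{p,q}+\beta_{p,q}+\gamma_{p,q}=1$. The equilateral assertion in (ii), and all of (iii), are then reduced \emph{via} (i) to positive triangle triples, where $\phi$ lives in $\cD$ and the formula $\phi(\cS_{p,q}(\Delta))=\xi_{p,q}^3\phi(\Delta)$ applies. You invoke $\phi$ without making that reduction, which leaves a small gap since $\phi$ in (\ref{eq2.1}) is only set up for positive triples; you would either need (i) (which you have not proved) or to argue that your eigen-decomposition computation of the shape ratio is insensitive to orientation.

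For (iii) itself your approach is the paper's: show $\xi_{p,q}^3=1$ in each case and conclude from (\ref{eq2.3}). Your detour for $p=q$ is unnecessary. While $t_{p,q}$ is indeed undefined there, the second expression in (\ref{eq2.2}),
\[
\xi_{p,q}=\frac{(p-q)+(p-1)(2q-1)\,\omega}{(p-q)+(p-1)(2q-1)\,\omega^{2}},
\]
gives $\xi_{q,q}=\omega/\omega^{2}=\omega^{2}$ directly (for $q\ne 1,\tfrac12$), so $\xi_{q,q}^3=1$ with no appeal to $F(z)$ or to the $\sJ$-eigenbasis. The paper simply records this as a direct computation. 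Your eigen-decomposition paragraph is correct and in fact previews the Fourier viewpoint of \S5 (Proposition \ref{DiagActionSpq}), but it is more machinery than needed here.

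Finally, the side conditions: $r\ne 2$ is exactly the condition $pq\ne 1$ for both $\cS_{\frac12,r}$ and $\cS_{r,\frac12}$, and $q\ne -1$ is $pq\ne 1$ for $\cS_{q,q}$ (the case $q=1$ being already excluded by $pq\ne 1$). They simply guarantee that the operator is defined; your attempt to tie $r\ne 2$ to well-definedness of $t_{p,q}$ or to ``$p\ne q$'' is off target.
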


\begin{proof} 
(i) follows immediately from the definition of $\cS_{p,q}$.
(ii) The first assertion is easy to see from $a+b+c=a'+b'+c'$ in (\ref{eq1.1})
due to the identity $\alpha_{p,q}+\beta_{p,q}+\gamma_{p,q}=1$ encoded in
our definition. 
For the second, suppose first that $\Delta$ is given as a positive triangle triple 
representing an equilateral triangle. 
The action of $\cS_{p,q}$ on the similarity class of $\Delta$ is reduced to 
the moduli operator $T_{p,q}$ acting on the moduli disc through multiplication by $\xi_{p,q}^3$.
In particular, it fixes 0 which is the modulus of the equilateral triangles.
If $\Delta$ is given as a negative equilateral triple, then 
$\overline{\Delta}$ is a positive equilateral
triple, which, by the above argument, is sent 
to an equilateral triple by $\cS_{\bar p,\bar q}$.
Then, apply (i) to get the asserted conclusion.
(iii) Again by virtue of (i), it suffices to show the assertion for positive triangle triples.
Direct computation shows that $\xi_{\frac{1}{2},r}^3=\xi_{r,\frac{1}{2}}^3=\xi_{q,q}^3=1$.
It follows from (\ref{eq2.3}) that the corresponding operators 
$\cS_{\frac{1}{2},r}(\Delta)$, $\cS_{r,\frac{1}{2}}(\Delta)$
and $\cS_{q,q}(\Delta)$
do not change the similarity classes
of triangles.
\end{proof}

Observe in (\ref{eq2.2}), the quantities $t_{p,q}$ and $\xi_{p,q}$
make senses to be valued in $\bP^1(\C)=\C\cup\{\infty\}$ as long as 
$(p,q)\ne(1,1),(\frac12,\frac12)$ (even when $pq=1$ with $\cS_{p,q}$ undefined).
Therefore, at the level of moduli action, we shall extend $T_{p,q}$
for all complex pairs $(p,q)\ne(1,1),(\frac12,\frac12)$.
Consider now the bijective mapping 
$$
\psi:\bP^1_t(\C)\isom\bP^1_\xi(\C): t\mapsto \xi=\frac{1+t\omega}{1+t\omega^2}
$$ 
of the (Riemann) $t$-sphere onto the $\xi$-sphere modelled on (\ref{eq2.2}), which
in particular maps $\psi(0)=1$, $\psi(\rho)=0$ and $\psi(\rho^{-1})=\infty$.
Recall here the notations $\rho:=e^{2 \pi i/6}$, $\omega:=e^{2 \pi i/3}$.
\begin{Definition}
Let $T(\C):=\bP^1_t(\C)-\{\rho,\rho^{-1}\}$ and 
introduce a commutative group structure $[+]$ by pulling back the
multiplication on $\bG_m(\C)=\bP^1_\xi(\C)-\{0,\infty\}$ via $\psi$.
\end{Definition}

\begin{Proposition} 
Notations being as above, the following assertions hold:
\begin{enumerate}[label=(\roman*),font=\upshape]
\item
For $t,t'\in T(\C)$, $t[+]t'=\frac{t+t'-tt'}{1-tt'}$. 
\item
If $[N]:T(\C)\to T(\C)$ denotes the multiplication by $N$
in the sense of $[+]$ for $N\in\Z$, then 
$$
[N]\cdot t=
\frac{(1+\omega t)^N-(1+\omega^{-1}t)^N}{\rho(1+\omega t)^N-\rho^{-1}(1+\omega^{-1}t)^N}.
$$
\item
The $N$-division points of $T(\C)$ are given by
$$
t=\frac{\sin \pi \frac{k}{N}}{\sin\pi(\frac{k}{N}+\frac{1}{3})}
\quad
(k=0,1,...,N) .
$$
\end{enumerate}
\end{Proposition}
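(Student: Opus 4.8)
The plan is to treat all three parts as consequences of transporting the known multiplicative structure on $\bG_m(\C)$ through the isomorphism $\psi$, so that essentially everything reduces to a computation with the single Möbius map $\xi=\psi(t)=\frac{1+t\omega}{1+t\omega^2}$ and its inverse. First I would record the inverse map explicitly: solving $\xi(1+t\omega^2)=1+t\omega$ gives $t=\psi^{-1}(\xi)=\frac{\xi-1}{\omega-\omega^2\xi}$. For (i), the group law $t[+]t'$ is by definition $\psi^{-1}\bigl(\psi(t)\psi(t')\bigr)$, so I would substitute $\psi(t)\psi(t')=\frac{(1+t\omega)(1+t'\omega)}{(1+t\omega^2)(1+t'\omega^2)}$ into the formula for $\psi^{-1}$ and simplify. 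The numerator and denominator each expand using $1+\omega+\omega^2=0$ and $\omega^3=1$; after clearing the common factors one lands on $\frac{t+t'-tt'}{1-tt'}$. This is routine but is the computational heart of (i); the only thing to be careful about is checking that the result stays in $T(\C)$, i.e. that $1-tt'$ does not vanish exactly when the product leaves $\bG_m(\C)$, which follows because $\psi$ is a bijection onto $\bP^1_\xi$ matching $\{0,\infty\}$ with $\{\rho,\rho^{-1}\}$.

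For (ii), since $[+]$ is pulled back from multiplication, $[N]\cdot t=\psi^{-1}\bigl(\psi(t)^N\bigr)$ for every $N\in\Z$ (negative $N$ handled by $\psi(t)^{-1}$, using that inversion on $\bG_m$ corresponds to $t\mapsto$ the appropriate involution on $T(\C)$). So I would simply compute $\psi^{-1}$ of $\xi^N=\frac{(1+t\omega)^N}{(1+t\omega^2)^N}$. Writing $\omega^{-1}=\omega^2$, the formula $\psi^{-1}(\xi)=\frac{\xi-1}{\omega-\omega^2\xi}$ gives, after multiplying top and bottom by $(1+t\omega^2)^N$,
\[
[N]\cdot t=\frac{(1+\omega t)^N-(1+\omega^2 t)^N}{\omega(1+\omega^2 t)^N-\omega^2(1+\omega t)^N}.
\]
To match the stated form I would multiply numerator and denominator by $\omega^2$ (or equivalently note $\omega=\rho^2$, $\omega^2=\rho^{-2}=\rho^4$, and $\rho^3=-1$) so that the denominator becomes $\rho(1+\omega t)^N-\rho^{-1}(1+\omega^{2}t)^N$ up to an overall unit that cancels against the numerator's rescaling; bookkeeping with the sixth roots of unity $\rho,\rho^2=\omega,\rho^3=-1$ is the one place to be attentive. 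A quick sanity check at $N=1$ recovers $t$, and at $N=0$ recovers $0=\psi^{-1}(1)$.

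For (iii), the $N$-division points are by definition the $t$ with $[N]\cdot t=0$, equivalently $\psi(t)^N=1$, i.e. $\psi(t)$ is an $N$-th root of unity; so $\xi=e^{2\pi i k/N}$ for $k=0,1,\dots,N$ (the value $k=0$, giving $\xi=1$, corresponds to $t=0$; note $k$ and $N-k$ may coincide in $\bP^1_\xi$ only at $\xi=\pm1$, but listing $k=0,\dots,N$ as in the statement is harmless). Then I would plug $\xi=e^{2\pi i k/N}$ into $t=\psi^{-1}(\xi)=\frac{\xi-1}{\omega-\omega^2\xi}$ and convert to trigonometric form: writing $\xi=e^{i\theta}$ with $\theta=2\pi k/N$, one has $\xi-1=e^{i\theta/2}\cdot 2i\sin(\theta/2)$, and since $\omega=e^{2\pi i/3}$ one similarly factors $\omega-\omega^2\xi=\omega\,(1-\omega\xi)=\omega e^{i(\theta/2+ \text{phase})}\cdot(\text{sine factor})$; collecting the phases the modulus-one factors cancel and what remains is $\frac{\sin(\pi k/N)}{\sin(\pi(k/N+1/3))}$, using $2\pi/3 = 2\pi/6\cdot 2$ so the shift in the denominator's argument is exactly $1/3$. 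I expect this last trigonometric normalization — tracking the half-angle phase factors so they cancel cleanly and pinning down that the denominator's shift is $+\frac13$ rather than $-\frac13$ or $\frac16$ — to be the main (though still elementary) obstacle; everything else is forced by the definition of $[+]$ as a pullback.
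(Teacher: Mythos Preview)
Your approach is correct and is exactly what the paper does: its proof simply records that the formulae follow from $\psi(t[+]t')=\psi(t)\psi(t')$, $\psi([N]t)=\psi(t)^N$, and the characterization of $N$-division points via $\psi([N]t)=1$. One small simplification in your step (ii): no rescaling is needed, since $-\omega^2=\rho$ and $\omega=-\rho^{-1}$ directly, so your denominator $\omega(1+\omega^2 t)^N-\omega^2(1+\omega t)^N$ already equals $\rho(1+\omega t)^N-\rho^{-1}(1+\omega^{-1}t)^N$.
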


\begin{proof}
These formulae follow in straightforward ways from the above 
definitions after noting
$\psi(t[+]t')=\psi(t)\psi(t')$, $\psi([N]t)=\psi(t)^N$ and
`$\psi([N]t)=1\Leftrightarrow$ $t$ is an $N$-division point'.
\end{proof}

The above group structure on $T$ is essentially same as what
was introduced in
Komatsu \cite{Ko04} and Ogawa \cite{Oga03} for a twisted Kummer theory
designed to control $n$-cyclic extensions of number fields containing $\cos(2\pi i/n)$.
(Our case is a special case of $n=6$ with $t^{-1}$ corresponding to the 
standard coordinate in their theory). 
B\'enyi-\'Curgus \cite{BC12} studied the same additive structure
on the `normal' locus $T(\R)$.

In fact, $T$ has a natural structure of algebraic torus defined over $\Q$:
one can equip $T$ with the $\Q$-structure of the affine conic (ellipse) 
$S: u^2+uv+v^2=1$ by transformations by 
$$
u = \frac{1-t^2}{t^2-t+1}, v = \frac{t(t-2)}{t^2-t+1}; \quad
t=-\frac{1+u}{v}.
$$
Then, the additive structure $[+]$ on $T$ can be understood as that 
induced from a
comultiplication on the structure ring 
$\Q[u,v]/(u^2+uv+v^2-1)$ of $S$:
See N.Suwa \cite{Su08} for more thorough 
generalization of Kummer theory toward contexts of 
commutative group schemes.

\begin{Remark}
If $p\ne q$ and $t=(p-1)(2q-1)/(p-q)$ lies in the {\it lower} half plane 
$\cH^{-}$, then 
$\cS_{p,q}$ alters some positive triangle triples $(a,b,c)$ into non-positive ones, 
i.e., into triples $(a',b',c')$ with $Im( \frac{a'-b'}{c'-b'})<0$. 
This case still makes sense and is interesting, however, in this paper 
we do not focus on non-regular operations.
\end{Remark}

\begin{Remark}
The above expression (\ref{eq2.1}) of our triangle modulus $\phi(\Delta)$
was notified by G.Nicollier to the first author. 
It is a crucial observation that the triangles on the same radius in $\cD$
have the same Brocard angle $\upomega(\Delta)$, i.e., 
$$
|\phi(\Delta)|=r^3 \ (0<r<1) \Leftrightarrow 
\frac{\cot(\upomega(\Delta))}{\sqrt{3}}=\frac{1+r^2}{1-r^2}.
$$  
See the Introduction of \cite{Nic13}. 
The quantity inside the cube in (\ref{eq2.1}) is called a shape function
on triangle triples and has been intensively studied by recent works 
by M.Hajja et. al. (cf.\,e.g., \cite{H09} and references therein).
\end{Remark}

\section{Fourier transform $\cS[\eta,\eta']$}

Let $\area(\Delta)$ denote the area of a triangle $\Delta$.
We begin this section with:

\begin{Proposition} \label{prop4.1}
Suppose $p,q\in\R$ $(pq\ne 1)$.
The ratio $\area(\cS_{p,q}(\Delta))/\area(\Delta)$ is given by
$$
\frac{\area(\cS_{p,q}(\Delta))}{\area(\Delta)}=
\left|\frac{p-q}{1-pq}\cdot (1+t_{p,q}\omega^2)\right|^2,
$$
where the quantity $t_{p,q}$ is as in (\ref{eq2.2}).
\end{Proposition}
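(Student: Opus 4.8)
The plan is to compute the area of a triangle triple $(a,b,c)$ as a Hermitian-type expression in the coordinates and then track how the linear operator $\sS_{p,q}$ scales it. Recall that for a (positive) triangle triple the signed area is $\area(\Delta)=\tfrac{1}{4i}\bigl((a-b)\overline{(c-b)}-\overline{(a-b)}(c-b)\bigr)$, or equivalently (up to a fixed nonzero constant) the area is governed by $a+b\omega+c\omega^2$ and its conjugate-type companion $a+b\omega^2+c\omega$. Concretely, using the eigenbasis of $\sJ$ one has the identity $\area(\Delta)=\tfrac{1}{something}\bigl(|a+b\omega+c\omega^2|^2-|a+b\omega^2+c\omega|^2\bigr)$ up to sign, so area depends on the two ``Fourier coordinates'' $u_{\pm}(\Delta):=a+b\omega^{\pm1}+c\omega^{\mp1}$ via $\area(\Delta)\propto |u_+|^2-|u_-|^2$.

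The key step is then to observe that $\sS_{p,q}=\alpha\sI+\beta\sJ+\gamma\sJ^2$ is a polynomial in $\sJ$, so it is diagonalized by the same Fourier basis: it multiplies the eigenvector for eigenvalue $\omega^{k}$ by $\alpha+\beta\omega^{k}+\gamma\omega^{2k}$. Thus $u_{+}(\cS_{p,q}\Delta)=\lambda_+\,u_+(\Delta)$ and $u_-(\cS_{p,q}\Delta)=\lambda_-\,u_-(\Delta)$ with $\lambda_{\pm}=\alpha_{p,q}+\beta_{p,q}\omega^{\pm1}+\gamma_{p,q}\omega^{\mp1}$. Since $p,q\in\R$, the coefficients $\alpha,\beta,\gamma$ are real, hence $\lambda_-=\overline{\lambda_+}$ and $|\lambda_+|=|\lambda_-|$. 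Therefore $|u_+(\cS\Delta)|^2-|u_-(\cS\Delta)|^2=|\lambda_+|^2\bigl(|u_+(\Delta)|^2-|u_-(\Delta)|^2\bigr)$, which gives $\area(\cS_{p,q}\Delta)/\area(\Delta)=|\lambda_+|^2=|\lambda_-|^2$ (the common scaling factor), valid for every $\Delta$.

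It remains to identify $|\lambda_-|^2$ with $\bigl|\tfrac{p-q}{1-pq}(1+t_{p,q}\omega^2)\bigr|^2$. Plugging in the explicit values $\alpha_{p,q}=\tfrac{p(1-q)}{1-pq}$, $\beta_{p,q}=\tfrac{q(1-p)}{1-pq}$, $\gamma_{p,q}=\tfrac{(1-p)(1-q)}{1-pq}$ and using $\omega^2=\bar\omega$, $1+\omega+\omega^2=0$, one simplifies $(1-pq)\lambda_- = p(1-q)+q(1-p)\omega^2+(1-p)(1-q)\omega$; a short rearrangement should yield exactly $(p-q)+(p-1)(2q-1)\omega = (p-q)\bigl(1+t_{p,q}\omega\bigr)$ (or its $\omega\leftrightarrow\omega^2$ analogue), matching the claimed formula after taking absolute values and recalling $t_{p,q}=(p-1)(2q-1)/(p-q)$. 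The only mild obstacle is bookkeeping: getting the right branch ($\omega$ versus $\omega^2$, and an overall sign/normalization of the area form) so that the final expression lands on $1+t_{p,q}\omega^2$ rather than its conjugate; but since the statement only asserts equality of absolute values, this sign/conjugation ambiguity is harmless. Alternatively, as the authors hint, one can instead invoke Nicollier's convolution description of $\cS_{p,q}$ and read off the area ratio from the Fourier coefficients of the generalized Kiepert triangle, which is essentially the same computation repackaged.
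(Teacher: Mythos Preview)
Your approach is correct and is genuinely different from the paper's. The paper proves this proposition by invoking Nicollier's interpretation of $\cS_{p,q}(\Delta)$ as (a scalar multiple of) the convolution $\Delta\ast K(z)$ with a generalized Kiepert triangle, specializes to the equilateral $\Delta_0=(1,\omega,\omega^2)$, and computes the area ratio from the explicit geometry of Kiepert ears. Your route is the direct spectral one: since $\sS_{p,q}=\alpha\sI+\beta\sJ+\gamma\sJ^2$ is a polynomial in $\sJ$, it acts by scalars $\lambda_\pm=\alpha+\beta\omega^{\mp1}+\gamma\omega^{\pm1}$ on the Fourier coordinates $u_\pm=a+b\omega^{\pm1}+c\omega^{\mp1}$, and for real $p,q$ these scalars are complex conjugates, so the signed area $\propto |u_+|^2-|u_-|^2$ is multiplied by $|\lambda_\pm|^2$; the algebraic check $(1-pq)\lambda_+=(p-q)(1+t_{p,q}\omega^2)$ then finishes it. (Your labeling of which eigenvalue hits which $u_\pm$ is off by a swap, but as you note this is irrelevant for absolute values.) Your argument is more self-contained, needing no external reference, and in fact it anticipates exactly the Fourier machinery the paper develops \emph{after} this proposition (the diagonalization $\sS_{p,q}\sim\mathrm{diag}(1,\eta',\eta)$ and the area formula $\area(\Delta)=\tfrac{3\sqrt{3}}{4}\bigl||\psi_1|^2-|\psi_2|^2\bigr|$). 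The paper's order of exposition lets this proposition \emph{motivate} introducing $\eta_{p,q}$ and $\eta'_{p,q}$, whereas your proof essentially presupposes them; conversely, the Nicollier route offers geometric content (the Kiepert-ear picture) that the pure eigenvalue computation does not.
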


\begin{proof}
Since the operation $\cS_{p,q}$ belongs to affine geometry, 
the ratio $\area(\cS_{p,q}(\Delta))/\area(\Delta)$ is independent
of choice of $\Delta$. 
We may assume, for example, $\Delta$
is the equilateral triangle $\Delta_0=(1,\omega,\omega^2)$. 
We make use of G.Nicollier's interpretation of $\cS_{p,q}(\Delta)$
in \cite{Nic13}\,\S 4\,(Remark in p.76-77): if $p(2q-3)\ne -1$ then
$\cS_{p,q}(\Delta)\equiv \frac{p(2q-3)+1}{1-pq}\Delta\ast
K(\frac{q-p}{p(2q-3)+1})$, where $\equiv$ means congruence
up to parallel translations and $\Delta\ast K(z)$ is 
(the convolution with the Kiepert triangle $K(z)$)
the triangle obtained from the three apices 
of `$(1-z:z)$-ears' of the sides of $\Delta$.
It follows easily that 
\begin{equation}
\frac{\area(\Delta_0\ast K(z))}{\area(\Delta_0)}
=3\left|
z-\left(
\frac12-\frac{\sqrt{3}i}{6}
\right)
\right|^2.
\end{equation}
Noting this and the equality $p(2q-3)+1=(2-t_{p,q})(q-p)$, we obtain
the desired result by computing the left hand side as
$$
\frac{\area(\cS_{p,q}(\Delta_0))}{\area(\Delta_0)}=
\left|\frac{(2-t_{p,q})(p-q)}{(1-pq)}\right|^2\cdot 3
\left|\frac{1}{2-t_{p,q}}-\left(\frac12-\frac{\sqrt{3}i}{6}\right)
\right|^2
=\left|\frac{p-q}{1-pq} (1+t_{p,q}\omega^2)\right|^2.
$$
We may remove the assumption $p(2q-3)\ne -1$ for
the ratio is obviously continuous in parameters $p,q$.
\end{proof}

\begin{Example}
The famous Routh's theorem states that 
both $\cS_{\frac{1}{3},\frac23}(\Delta)$ and  $\cS_{\frac{2}{3},\frac13}(\Delta)$
have one-seventh area of $\Delta$. 
Applying Proposition \ref{symm_pq} (ii) shown later, 
we can express matrices for these two Routh operators as  
$\sS_{\frac{1}{3},\frac23}=\sJ\, \cS_{\frac45,\frac23}=\sJ^2 \,\sS_{\frac{1}{3},\frac15}$
and 
$\cS_{\frac{2}{3},\frac13}=\sJ\, \cS_{\frac{1}{5},\frac13}=\sJ^2\, \cS_{\frac23,\frac45}$.
The twofold (families of) Routh triangles 
$\cS_{\frac{1}{3},\frac23}(\Delta)$ and $\cS_{\frac{2}{3},\frac13}(\Delta)$
share the centroid with $\Delta$, however, they 
are {\it generally not} point-symmetrical to each other.
Using Proposition \ref{symm_pq} (iii), we find that $\cS_{p,q}(\Delta) $ and 
$\cS_{q,p}(\Delta)$ form a point-symmetrical pair for every triangle $\Delta$
if and only if $4pq-3p-3q+2=0$.
For example, $(\cS_{\frac13,\frac35}, \cS_{\frac35,\frac13})$,
$(\cS_{\frac25,\frac47}, \cS_{\frac47,\frac25})$ are such pairs, but
$(\cS_{\frac13,\frac23}, \cS_{\frac23,\frac13})$ is not.
In \cite[Example 2.15]{NO2}, we give another characterization of 
the Routh operators by their special relationships to 
``generalized median operators''.
The above area formula for real parameters $p,q$ 
in Proposition \ref{prop4.1}
may be regarded as a special case of a more general
formula given in \cite{BC13}, where other interesting numerical 
examples with rational parameters can be found.
Generalization for complex parameters will be discussed in 
Proposition \ref{GeneralAreaFormula} below.
\end{Example}

Motivated by Proposition \ref{prop4.1}, 
let us introduce new quantities $\eta_{p,q}$ {and $\eta'_{p,q}$} by:
\begin{equation} \label{eq4.2}
\eta_{p,q}:=(1+t_{p,q}\omega)\frac{p-q}{1-pq}, \quad 
{\eta'_{p,q}:=(1+t_{p,q}\omega^2)\frac{p-q}{1-pq}}.
\end{equation}

It is clear that the parameter $\xi_{p,q}$ introduced in (\ref{eq2.2})
satisfies $\xi_{p,q}=\eta_{p,q}/\eta'_{p,q}$.
Recall that the parameter $\xi_{p,q}$
comes from the action 
$T_{p,q}$ on the moduli disk $\cD$, the moduli disk
for the similarity classes of triangle.
The above numerator-denominator separation $(\eta_{p,q},\eta'_{p,q})$ 
of the moduli action $\xi_{p,q}$ does, in fact, recover 
the operator $\cS_{p,q}$ itself as follows:

\begin{Proposition} \label{prop.abc}
Notations being (\ref{eq1.1}), the operator $\cS_{p,q}$ is determined by the
parameter $(\eta_{p,q},\eta'_{p,q})$ with
\begin{align*}
\alpha_{p,q}&=\frac{1}{3}(1+\eta_{p,q}+\eta'_{p,q}), \\ 
\beta_{p,q}&=\frac{1}{3}(1+\omega\,\eta_{p,q}+\omega^2\eta'_{p,q}), \\ 
\gamma_{p,q}&=\frac{1}{3}(1+\omega^2\eta_{p,q}+\omega\,\eta'_{p,q}). 
\end{align*}
\end{Proposition}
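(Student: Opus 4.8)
The plan is to verify directly that the three expressions on the right-hand side, call them $\tilde\alpha,\tilde\beta,\tilde\gamma$, coincide with $\alpha_{p,q},\beta_{p,q},\gamma_{p,q}$. Since $1,\omega,\omega^2$ are the cube roots of unity, the system
\begin{align*}
\tilde\alpha&=\tfrac13(1+\eta_{p,q}+\eta'_{p,q}),\\
\tilde\beta&=\tfrac13(1+\omega\,\eta_{p,q}+\omega^2\eta'_{p,q}),\\
\tilde\gamma&=\tfrac13(1+\omega^2\eta_{p,q}+\omega\,\eta'_{p,q})
\end{align*}
is nothing but an inverse discrete Fourier transform: it is equivalent to the three relations $\tilde\alpha+\tilde\beta+\tilde\gamma=1$, $\tilde\alpha+\omega^2\tilde\beta+\omega\tilde\gamma=\eta_{p,q}$, and $\tilde\alpha+\omega\tilde\beta+\omega^2\tilde\gamma=\eta'_{p,q}$. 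So it suffices to check these three identities with $\tilde\alpha,\tilde\beta,\tilde\gamma$ replaced by $\alpha_{p,q},\beta_{p,q},\gamma_{p,q}$.

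First I would record $\alpha_{p,q}+\beta_{p,q}+\gamma_{p,q}=1$, which is already noted in the paper right after \eqref{eq1.1}. Then I would compute $\alpha_{p,q}+\omega^2\beta_{p,q}+\omega\gamma_{p,q}$ by substituting the explicit fractions with common denominator $1-pq$; the numerator is $p(1-q)+\omega^2 q(1-p)+\omega(1-p)(1-q)$, which I would expand and regroup, using $1+\omega+\omega^2=0$, into a multiple of $\frac{p-q}{1-pq}$ times $(1+t_{p,q}\omega)$ with $t_{p,q}=\frac{(p-1)(2q-1)}{p-q}$ as in \eqref{eq2.2}; this should reproduce exactly $\eta_{p,q}$ from \eqref{eq4.2}. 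The symmetric computation for $\alpha_{p,q}+\omega\beta_{p,q}+\omega^2\gamma_{p,q}$ gives $\eta'_{p,q}$, either by the same expansion or by observing it is the image of the previous one under the substitution $\omega\leftrightarrow\omega^2$, which swaps $\eta_{p,q}$ and $\eta'_{p,q}$. Inverting the (nonsingular, Vandermonde-type) $3\times3$ Fourier matrix then yields the three displayed formulas for $\alpha_{p,q},\beta_{p,q},\gamma_{p,q}$.

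The one point requiring a little care is the algebraic identity matching $p(1-q)+\omega^2 q(1-p)+\omega(1-p)(1-q)$ with $(p-q)+(p-1)(2q-1)\omega$ up to the common factor; this is the place where the particular definitions of $t_{p,q}$ and $\eta_{p,q}$ were tuned, and it is really the crux of the statement, though it is only a routine expansion using $\omega^2=-1-\omega$. Everything else — the sum being $1$, and the passage from the three Fourier coefficients back to $\alpha,\beta,\gamma$ — is formal. I expect no genuine obstacle; the main thing is to carry the common denominator $1-pq$ through cleanly and to use $1+\omega+\omega^2=0$ at the right moment.
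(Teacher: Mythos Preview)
Your approach is correct and complete: recognizing the displayed formulas as an inverse DFT, reducing to the three identities $\alpha+\beta+\gamma=1$, $\alpha+\omega^2\beta+\omega\gamma=\eta_{p,q}$, $\alpha+\omega\beta+\omega^2\gamma=\eta'_{p,q}$, and verifying these directly from the definitions via $\omega^2=-1-\omega$. The key algebraic step you isolate,
\[
p(1-q)+\omega^2 q(1-p)+\omega(1-p)(1-q)=(p-q)+(p-1)(2q-1)\,\omega,
\]
is indeed a one-line expansion, and the third identity follows by the $\omega\leftrightarrow\omega^2$ swap as you say.

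The paper's proof takes a different route: it substitutes the \emph{inverse} expressions $p=p(\eta,\eta')$, $q=q(\eta,\eta')$ (the rational functions appearing in the subsequent Proposition~\ref{prop4.5}) into the defining formulas for $\alpha_{p,q},\beta_{p,q},\gamma_{p,q}$ and simplifies. That argument is terser on the page but relies on a forward reference and on having already solved for $p,q$ in terms of $\eta,\eta'$. Your argument is self-contained and is, in effect, the eigenvalue computation underlying the next Proposition~\ref{DiagActionSpq} (diagonalizing $\sS_{p,q}=\alpha\sI+\beta\sJ+\gamma\sJ^2$ on the basis of eigenvectors of $\sJ$), carried out by hand. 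Either way the content is the same routine Fourier inversion; your version just makes the logic flow in the natural forward direction.
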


\begin{proof}
This follows immediately after
substituting $p=p(\eta,\eta')$, $q=q(\eta,\eta')$ for $\alpha_{p,q}$, $\beta_{p,q}$ and $\gamma_{p,q}$
in  (\ref{eq1.1}). 
\end{proof}

Now, we shall give interpretation of the parameter $(\eta_{p,q},\eta'_{p,q})$
from the view point of finite Fourier series. In the spirit of 
\cite{Sch50}, \cite{Nic13}, considering a triangle triple $\Delta=(a,b,c)$ is equivalent
to considering its Fourier transform $(\psi_0,\psi_1,\psi_2)$ defined as the coefficients
of a quadratic polynomial 
$\Psi_\Delta(T)=\psi_0(\Delta)+\psi_1(\Delta) T+\psi_2(\Delta) T^2\in\C[T]$
uniquely determined by the identities
$\Psi_\Delta(1)=a, \Psi_\Delta(\omega)=b, \Psi_\Delta(\omega^2)=c$.
Explicitly, they are given by
$\psi_0(\Delta)=\frac{1}{3}(a+b+c)$,
$\psi_1(\Delta)=\frac{1}{3}(a+b\omega^2+c\omega)$,
$\psi_2(\Delta)=\frac{1}{3}(a+b\omega+c\omega^2)$.

\begin{Definition} 
Let $\Delta=(a,b,c)$ be a triangle triple. Notations being as above, 
the Fourier transformed vector $\uppsi(\Delta)$ of $\Delta$ is defined by
$$
\uppsi(\Delta)=\tvect{\psi_0(\Delta)}{\psi_1(\Delta)}{\psi_2(\Delta)}.
$$
\end{Definition}

By simple (matrix) computation, we find that the parameter $(\eta_{p,q},\eta'_{p,q})$
is just the ratio of the 2nd and 3rd Fourier coefficients.
More precisely
\begin{Proposition} \label{DiagActionSpq}
For each triangle triple $\Delta$, it holds that
\begin{equation*}
\uppsi(\cS_{p,q}(\Delta))=\mathrm{diag}(1,\eta'_{p,q},\eta_{p,q})\cdot
\uppsi(\Delta).
\end{equation*}
\end{Proposition}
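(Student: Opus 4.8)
The plan is to turn the statement into a single matrix identity and to exploit the fact that $\sS_{p,q}$ is a polynomial in the circulant matrix $\sJ$, so that the discrete Fourier transform diagonalizes the entire family $\{\sS_{p,q}\}$ at once. Let $W$ be the $3\times 3$ matrix with rows $(1,1,1)$, $(1,\omega^2,\omega)$, $(1,\omega,\omega^2)$, so that the explicit formulas for $\psi_0,\psi_1,\psi_2$ recorded just before the statement say exactly $\uppsi(\Delta)=\tfrac13 W\tvect{a}{b}{c}$; since $W$ is symmetric and $W\overline W=3\sI$, one also has $\tvect{a}{b}{c}=\overline W\,\uppsi(\Delta)$. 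Because $\cS_{p,q}(\Delta)=\sS_{p,q}\tvect{a}{b}{c}$ by (\ref{eq1.1}), proving the proposition amounts to checking
$$\tfrac13\,W\,\sS_{p,q}\,\overline W=\mathrm{diag}(1,\eta'_{p,q},\eta_{p,q}).$$

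First I would diagonalize $\sJ$ itself: the columns of $\overline W$ are the vectors $\tvect{1}{\zeta}{\zeta^2}$ for $\zeta\in\{1,\omega,\omega^2\}$, and $\sJ\tvect{1}{\zeta}{\zeta^2}=\zeta\tvect{1}{\zeta}{\zeta^2}$ by a one-line check, so $\sJ\overline W=\overline W\,\mathrm{diag}(1,\omega,\omega^2)$ and hence $\sJ^2\overline W=\overline W\,\mathrm{diag}(1,\omega^2,\omega)$. Substituting $\sS_{p,q}=\alpha_{p,q}\sI+\beta_{p,q}\sJ+\gamma_{p,q}\sJ^2$ and using $\tfrac13 W\overline W=\sI$ collapses the left-hand side to the diagonal matrix with entries $\alpha_{p,q}+\beta_{p,q}+\gamma_{p,q}$, $\alpha_{p,q}+\beta_{p,q}\omega+\gamma_{p,q}\omega^2$ and $\alpha_{p,q}+\beta_{p,q}\omega^2+\gamma_{p,q}\omega$.

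It then remains to identify these three entries. The first equals $1$ by the normalization $\alpha_{p,q}+\beta_{p,q}+\gamma_{p,q}=1$ built into the definition of $\cS_{p,q}$. For the other two I would insert the expressions of Proposition \ref{prop.abc} for $\alpha_{p,q},\beta_{p,q},\gamma_{p,q}$ in terms of $(\eta_{p,q},\eta'_{p,q})$ and simplify using $1+\omega+\omega^2=0$: in each sum the coefficients of $1$ and of one of $\eta_{p,q},\eta'_{p,q}$ drop out, leaving $\eta'_{p,q}$ and $\eta_{p,q}$ respectively. Equivalently, one can compute straight from $\alpha_{p,q}=\frac{p(1-q)}{1-pq}$ etc.\ and the definition (\ref{eq4.2}): reducing modulo $1+\omega+\omega^2$ gives $\alpha_{p,q}+\beta_{p,q}\omega+\gamma_{p,q}\omega^2=\frac{(p-q)+(p-1)(2q-1)\omega^2}{1-pq}=\eta'_{p,q}$, and symmetrically $\alpha_{p,q}+\beta_{p,q}\omega^2+\gamma_{p,q}\omega=\eta_{p,q}$.

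I do not expect a genuine obstacle here, since the content is essentially just that $\sS_{p,q}\in\C[\sJ]$; but the point requiring care is the bookkeeping of the eigenvalue orderings. One must track $1,\omega,\omega^2$ for $\sJ$ against $1,\omega^2,\omega$ for $\sJ^2$, and combine this with the placement of the rows giving $\psi_1$ and $\psi_2$ in $W$, so that the resulting transposition lands $\eta'_{p,q}$ in the $(2,2)$ slot and $\eta_{p,q}$ in the $(3,3)$ slot, matching $\mathrm{diag}(1,\eta'_{p,q},\eta_{p,q})$ and not $\mathrm{diag}(1,\eta_{p,q},\eta'_{p,q})$.
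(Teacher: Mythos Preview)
Your argument is correct and is essentially the same approach as the paper's: both reduce the statement to the matrix identity $\sS_{p,q}=W\,\mathrm{diag}(1,\eta'_{p,q},\eta_{p,q})\,W^{-1}$ (your $W$ is the complex conjugate of the paper's, which is harmless since $\overline W=3W^{-1}$), and the paper simply records this identity without further comment, whereas you spell out the diagonalization of $\sJ$ and the eigenvalue bookkeeping explicitly.
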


\begin{proof}
The asserted identity is equivalent to
\begin{equation}
\label{defSyy}
\sS_{p,q}=W
\begin{pmatrix}
1 & 0 & 0 \\
0 & \eta'_{p,q} & 0 \\
0 & 0 & \eta_{p,q} 
\end{pmatrix}
W^{-1}
\quad
\text{with} 
\quad
W=\begin{pmatrix}
1 & 1 & 1 \\
1 & \omega & \omega^2 \\
1 & \omega^2 & \omega
\end{pmatrix}
\end{equation}
in the space of $3\times 3$-matrices.
\end{proof}

Observing the above simple form of $\cS_{p,q}$ acting on the Fourier transformed triangle triples, 
we are led to the idea of viewing the operation $\cS_{p,q}$ by the parameters
$(\eta,\eta')\in\C^2$ rather than the original parameter $(p,q)\in\C^2$ 
with $pq\ne 1$:

\begin{Definition} \label{defSeta}
For $(\eta,\eta')\in\C^2$, we define the operator
$\cS[\eta,\eta']$ on the triangle triples by the action on their
Fourier transform vectors, and define its corresponding matrix
$\sS[\eta,\eta']$ as
\begin{equation} \label{eq4.9}
\uppsi(\cS[\eta,\eta'](\Delta))=\begin{pmatrix}
1 & 0 & 0 \\
0 & \eta' & 0 \\
0 & 0 & \eta 
\end{pmatrix}
\uppsi(\Delta), \quad
\sS[\eta,\eta']:=
W\begin{pmatrix}
1 & 0 & 0 \\
0 & \eta' & 0 \\
0 & 0 & \eta 
\end{pmatrix}
W^{-1},
\end{equation}
where $W$ is the Vandermonde matrix used in (\ref{defSyy}).
\end{Definition}

\begin{Remark} \label{rem4.9}
It is now evident that the collection 
$$
\bbS:=\{\sS[\eta,\eta']\mid \eta,\eta'\in\C\}
=\{\alpha\sI+\beta\sJ+\gamma\sJ^2\mid \alpha+\beta+\gamma=1\}
$$
forms a multiplicative submonoid of $M_3(\C)$ 
containing the set $\bbS_{p,q}$ considered in Note \ref{note2.7}.
Let $\bbS^\times:=\{\sS[\eta,\eta']\mid \eta,\eta'\in\C^\times\}$ be
the set of invertible elements of $\bbS$ which forms an abelian subgroup
isomorphic to $(\C^\times)^2$ in $\GL_3(\C)$.
Note that $\bbS_{p,q}\subset \bbS\supset \bbS^\times$. 
In the special case when $(p_1,q_1)=(\frac13,\frac14)$, $(p_2,q_2)=(-\frac78,-\frac19)$,
$\lambda_1=\lambda_2+2\lambda_3=0$
discussed in loc.\,cit. we can take the corresponding parameters 
$(\xi_1,\eta_1,\eta_1')=(-\frac{1+4\omega}{3+4\omega}, \frac{1+4\omega}{11},-\frac{3+4\omega}{11})$,
$(\xi_2,\eta_2,\eta_2')=(\frac{3\omega-1}{3\omega^2-1}, \frac{11}{13}(3\omega-1),
\frac{11}{13}(3\omega^2-1))
$. This shows that $\sS_{p_1,q_1}$, $\sS_{p_2,q_2}\in\bbS^\times\cap \bbS_{p,q}$.
Accordingly, the composition $\sS_{p_1,q_1}\sS_{p_2,q_2}=\sS[\eta_1,\eta_1']\sS[\eta_2,\eta_2']=
\sS[\eta_1\eta_2,\eta_1'\eta_2']=\sS[\omega^2,\omega]$ 
lies in $\bbS^\times$, whereas
it is not available in $\bbS_{p,q}$ as discussed in Note \ref{note2.7}. 
\end{Remark}

We now generalize Proposition \ref{prop4.1} for arbitrary complex parameters.
\begin{Proposition} \label{GeneralAreaFormula}
Let $\Delta=(a,b,c)$ be a triangle triple with its Fourier transform
$\uppsi(\Delta)=(\psi_0,\psi_1,\psi_2)$, and let $(\eta,\eta')\in \C^2$.
Then, 
\begin{enumerate}[label=(\roman*),font=\upshape]
\item
$\area(\Delta)=\frac{3\sqrt{3}}{4} \Bigl| |\psi_1|^2-|\psi_2|^2 \Bigr|$.

\item
If $\Delta$ is non-degenerate, then
$$
\frac{\area(\cS[\eta,\eta'](\Delta))}{\area(\Delta)}=
\left|
\frac{|\eta'\psi_1|^2-|\eta\psi_2|^2}{|\psi_1|^2-|\psi_2|^2}
\right|.
$$
\item The operator $\cS[\eta,\eta']$ preserves
areas of triangles if and only if $|\eta|=|\eta'|=1$.
\end{enumerate}
\end{Proposition}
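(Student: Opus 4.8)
The plan is to prove the three parts in order, deducing (ii) and (iii) from (i).

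\textbf{Part (i): the area formula via the Fourier transform.}
First I would express the (signed) area of a triangle triple $\Delta=(a,b,c)$ in the standard way as
$\area(\Delta) = \tfrac{1}{2}\bigl|\,\mathrm{Im}\bigl(\bar a b + \bar b c + \bar c a\bigr)\,\bigr|$,
or equivalently via the determinant $\frac{i}{4}\det\begin{pmatrix}1&1&1\\ a&b&c\\ \bar a&\bar b&\bar c\end{pmatrix}$ up to sign. The key algebraic step is to substitute the inverse Fourier transform $a=\psi_0+\psi_1+\psi_2$, $b=\psi_0+\omega^2\psi_1+\omega\psi_2$, $c=\psi_0+\omega\psi_1+\omega^2\psi_2$ (these are exactly the relations $\Psi_\Delta(1)=a$, $\Psi_\Delta(\omega)=b$, $\Psi_\Delta(\omega^2)=c$ read off Definition of $\uppsi$). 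Because the transform $W$ is, up to scaling, unitary (the columns $1,\omega,\omega^2$-vectors are orthogonal), the Hermitian/skew-Hermitian bilinear form computing the area diagonalizes: the cross terms involving $\psi_0$ and the $\psi_1\bar\psi_2$-type terms cancel by the character-sum identities $1+\omega+\omega^2=0$, leaving only $|\psi_1|^2$ and $|\psi_2|^2$. Carrying out this computation on, say, the equilateral model $\Delta_0=(1,\omega,\omega^2)$ (for which $\uppsi(\Delta_0)=(0,0,1)$ and $\area(\Delta_0)=\tfrac{3\sqrt3}{4}$) fixes the constant $\tfrac{3\sqrt3}{4}$, and the general case follows since both sides are the same quadratic expression in the $\psi_j$. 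I expect this bilinear-form bookkeeping — tracking exactly which terms survive the root-of-unity cancellations — to be the main obstacle; everything else is formal.

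\textbf{Part (ii): the area ratio.}
By Definition \ref{defSeta}, $\cS[\eta,\eta']$ acts on $\uppsi(\Delta)=(\psi_0,\psi_1,\psi_2)$ as multiplication by $\mathrm{diag}(1,\eta',\eta)$, so $\uppsi(\cS[\eta,\eta'](\Delta)) = (\psi_0, \eta'\psi_1, \eta\psi_2)$. Applying part (i) to both $\Delta$ and $\cS[\eta,\eta'](\Delta)$ gives
$$
\area(\cS[\eta,\eta'](\Delta)) = \tfrac{3\sqrt3}{4}\bigl|\,|\eta'\psi_1|^2 - |\eta\psi_2|^2\,\bigr|,
\qquad
\area(\Delta) = \tfrac{3\sqrt3}{4}\bigl|\,|\psi_1|^2 - |\psi_2|^2\,\bigr|,
$$
and the non-degeneracy of $\Delta$ guarantees $|\psi_1|^2 \ne |\psi_2|^2$ (indeed $\area(\Delta)\ne 0$ by part (i)), so the quotient is well-defined and equals the stated ratio.

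\textbf{Part (iii): the area-preserving criterion.}
For the ``if'' direction, $|\eta|=|\eta'|=1$ makes $|\eta'\psi_1|^2-|\eta\psi_2|^2 = |\psi_1|^2-|\psi_2|^2$ identically, so the ratio in (ii) is always $1$. For ``only if'', suppose $\cS[\eta,\eta']$ preserves all areas. Choosing triangle triples with $\uppsi$ realizing $(\psi_1,\psi_2)=(1,0)$ forces $|\eta'|^2=1$ (comparing $\bigl||\eta'|^2\bigr|$ with $1$), and $(\psi_1,\psi_2)=(0,1)$ forces $|\eta|^2=1$; one only needs to check such triples correspond to genuine non-degenerate triangles, which they do (e.g. take $\psi_0=0$, $\psi_1=1$, $\psi_2=0$, giving $\Delta=(1,\omega^2,\omega)$, a nondegenerate triangle). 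Hence $|\eta|=|\eta'|=1$. This part is immediate once (i) and (ii) are in hand.
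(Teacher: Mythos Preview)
Your proof is correct and follows essentially the same route as the paper: express the area via a Hermitian form in the vertex coordinates, substitute the inverse Fourier transform so that the cross terms vanish and only $|\psi_1|^2-|\psi_2|^2$ survives, then read off (ii) from the diagonal action and test (iii) on the two equilateral triples $(1,\omega,\omega^2)$ and $(1,\omega^2,\omega)$. Two small slips to clean up: in your inverse transform you have interchanged the formulas for $b$ and $c$ (the paper's convention gives $b=\psi_0+\omega\psi_1+\omega^2\psi_2$), and correspondingly $\uppsi(1,\omega,\omega^2)=(0,1,0)$ rather than $(0,0,1)$; neither affects the argument since absolute values are taken throughout.
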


\begin{proof}
(i). Recall that $\area(\Delta)=\bigl|\mathrm{Im}((b-a)\overline{(c-a)}\bigr|/2$.
In terms of $\uppsi(\Delta)$, one can express 
$$
(b-a)\overline{(c-a)}=(\omega-1)^2|\psi_1|^2+(\bar\omega-1)^2|\psi_2|^2
+(\omega-1)(\bar\omega-1)(\psi_1\overline{\psi_2}+\psi_2\overline{\psi_1}).
$$
Noting that the last term is real and that 
$\mathrm{Im}((\omega-1)^2)=-3\sqrt{3}/2$, 
we obtain the assertion.
(ii) follows from (i) and Definition \ref{defSeta}.
(iii) First, consider special cases of positive and negative equilateral triangles 
$\Delta_0=(1,\omega,\omega^2)$ and $\Delta_1=(1,\omega^2,\omega)$,
where $\uppsi(\Delta_0)=(0,1,0)$, $\uppsi(\Delta_1)=(0,0,1)$.
By virtue of (ii), 
$\area((\cS[\eta,\eta'])(\Delta_i))=\area(\Delta_i)$ 
for $i=0,1$ imply $|\eta'|=1$, $|\eta|=1$ respectively.
Conversely, if $|\eta|=|\eta'|=1$, then it is easy to see from (ii)
that $\cS[\eta,\eta'] $ preserves areas of every triangle.
\end{proof}

Before going further, let us quickly summarize how 
the parameters $p,q$ can be recovered from
{$\eta=\eta_{p,q}$, $\eta'=\eta'_{p,q}$}:

\begin{Proposition} \label{prop4.5}
Define two rational functions $p(y,y')$, $q(y,y')\in\C(y,y')$ by
\begin{equation*}
p(y,y')=\frac{1+y+y'}{2-\omega y-\omega^2\,y'}, \quad
q(y,y')=\frac{1+\omega\,y+\omega^2\,y'}{2-y-y'}.
\end{equation*}
Then, for $\eta=\eta_{p,q}$ {and $\eta'=\eta'_{p,q}$}, it holds that
$$
\cS_{p,q}=\cS_{p(\eta,\eta'),q(\eta,\eta')}=\cS[\eta,\eta']
$$ 
as long as involved quantities are well-defined. 
Moreover, if $p_i=p(\eta_i,\eta_i')$, $q_i=q(\eta_i,\eta_i')$ $(i=1,2,3)$ 
are given with $\eta_1\eta_2=\eta_3$, $\eta_1'\eta_2'=\eta_3'$, then
$\cS_{p_1,q_1}\circ \cS_{p_2,q_2}=\cS_{p_3,q_3}$ and vice versa.
\end{Proposition}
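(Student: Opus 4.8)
The plan is to verify the identities $p(\eta_{p,q},\eta'_{p,q})=p$ and $q(\eta_{p,q},\eta'_{p,q})=q$ directly, and then reap the composition statement essentially for free from Proposition~\ref{DiagActionSpq} together with Definition~\ref{defSeta}. First I would observe that the linear map $W$ is (up to the factor $3$) the inverse of the Fourier transform matrix appearing before Definition~\ref{defSeta}; concretely $\uppsi(\Delta)=\tfrac13 W^{-1}(a,b,c)^{t}$ with $W^{-1}=\tfrac13\overline{W}$, so the diagonalization (\ref{defSyy}) says precisely that $\eta_{p,q}$ and $\eta'_{p,q}$ are the eigenvalues of $\sS_{p,q}$ on the eigenvectors $(1,\omega^{2},\omega)^{t}$ and $(1,\omega,\omega^{2})^{t}$ respectively (the third eigenvalue being $1$ on $(1,1,1)^{t}$). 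This gives closed forms $\eta_{p,q}=\alpha_{p,q}+\omega^{2}\beta_{p,q}+\omega\gamma_{p,q}$ and $\eta'_{p,q}=\alpha_{p,q}+\omega\beta_{p,q}+\omega^{2}\gamma_{p,q}$, together with the obvious $1=\alpha_{p,q}+\beta_{p,q}+\gamma_{p,q}$. Inverting this $3\times 3$ linear system (again via $W$) recovers exactly Proposition~\ref{prop.abc}.

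Next I would plug the formulas of Proposition~\ref{prop.abc}, i.e. $\alpha=\tfrac13(1+\eta+\eta')$, $\beta=\tfrac13(1+\omega\eta+\omega^{2}\eta')$, $\gamma=\tfrac13(1+\omega^{2}\eta+\omega\eta')$ (with $\eta=\eta_{p,q}$, $\eta'=\eta'_{p,q}$), into the defining relations $\alpha_{p,q}=\tfrac{p(1-q)}{1-pq}$, etc. From these one extracts $p$ and $q$ as ratios of the $\alpha,\beta,\gamma$: a short manipulation of (\ref{eq1.1}) gives $p=\dfrac{\alpha+\gamma\,(?)}{\cdots}$ — more precisely one checks the bilinear relations $\alpha_{p,q}+\beta_{p,q}=\tfrac{p+q-2pq}{1-pq}$ and can solve $p=\alpha_{p,q}/(\alpha_{p,q}+\gamma_{p,q}\cdot\text{something})$; cleanly, from $\alpha_{p,q}=p(1-q)/(1-pq)$ and $\gamma_{p,q}=(1-p)(1-q)/(1-pq)$ we get $\alpha_{p,q}/(\alpha_{p,q}+\gamma_{p,q})$ in terms of $p$ alone, hence $p$ as a ratio linear in $\alpha,\beta,\gamma$, and symmetrically for $q$. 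Substituting the $\eta,\eta'$-expressions for $\alpha,\beta,\gamma$ then collapses, using $1+\omega+\omega^{2}=0$, to exactly the stated $p(y,y')=\dfrac{1+y+y'}{2-\omega y-\omega^{2}y'}$ and $q(y,y')=\dfrac{1+\omega y+\omega^{2}y'}{2-y-y'}$. This establishes $\cS_{p,q}=\cS_{p(\eta,\eta'),q(\eta,\eta')}$, and the equality with $\cS[\eta,\eta']$ is then immediate from Proposition~\ref{DiagActionSpq} and Definition~\ref{defSeta}, since both sides act on $\uppsi(\Delta)$ by the same diagonal matrix $\mathrm{diag}(1,\eta',\eta)$.

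For the composition claim, I would simply note that by Definition~\ref{defSeta} the matrices $\sS[\eta,\eta']$ and $\sS[\eta',\eta'']$ are simultaneously diagonalized by the fixed Vandermonde matrix $W$, so $\sS[\eta_1,\eta_1']\sS[\eta_2,\eta_2']=\sS[\eta_1\eta_2,\eta_1'\eta_2']$ as an identity of diagonal matrices conjugated by $W$. Hence if $\eta_3=\eta_1\eta_2$ and $\eta_3'=\eta_1'\eta_2'$ then $\cS[\eta_1,\eta_1']\circ\cS[\eta_2,\eta_2']=\cS[\eta_3,\eta_3']$, and translating back through the first part of the Proposition gives $\cS_{p_1,q_1}\circ\cS_{p_2,q_2}=\cS_{p_3,q_3}$; the converse direction follows because $(\eta,\eta')\mapsto(p,q)$ is the explicit rational inverse just constructed, so the correspondence is a bijection wherever everything is defined.

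The only genuinely delicate point is bookkeeping of the indeterminacy loci — ensuring the phrase ``as long as involved quantities are well-defined'' is honest. One must check that the denominators $2-\omega y-\omega^{2}y'$ and $2-y-y'$ vanish exactly when $\cS[\eta,\eta']$ fails to come from any admissible $(p,q)$ with $pq\neq1$ (equivalently, recovers a cyclic-permutation or degenerate case), and that the $\eta,\eta'$ defined by (\ref{eq4.2}) are themselves undefined precisely on the excluded loci $pq=1$ and $(p,q)=(1,1),(\tfrac12,\tfrac12)$. I expect this to be the main obstacle: the algebra of the identities is routine once the eigenvalue picture is in hand, but stating the exact domain of validity cleanly requires care, and I would handle it by working throughout with the monoid $\bbS$ of Remark~\ref{rem4.9}, where the composition is unconditional, and only at the end restricting to the rational chart given by $p(y,y'),q(y,y')$.
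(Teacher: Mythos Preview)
Your proposal is correct and follows essentially the same approach as the paper: both invert the relations between $(p,q)$ and $(\eta,\eta')$ by straightforward algebra and then read off the composition law from the simultaneous diagonalization (\ref{defSyy})/(\ref{eq4.9}). Your route via the clean ratios $p=\alpha_{p,q}/(\alpha_{p,q}+\gamma_{p,q})$ and $q=\beta_{p,q}/(\beta_{p,q}+\gamma_{p,q})$ is a tidy way to make explicit what the paper summarizes as ``solving (\ref{eq2.2}) and (\ref{eq4.2}) reversely,'' and your extra attention to the indeterminacy loci goes beyond what the paper records.
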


\begin{proof}
The first statement follows by straightforward computations
solving the equations
(\ref{eq2.2}) and (\ref{eq4.2}) reversely.
The latter statement can be seen from the above formula (\ref{eq4.9}).
Or, more directly, it follows from the fact that the cyclic matrices 
$\sS_{p,q}\in \C\sI+\C\sJ+\C\sJ^2$ are simultaneously 
diagonalized to $\mathrm{diag}(1,\eta'_{p,q},\eta_{p,q})$
 (via base change by the Vandermonde matrix $(\omega^{i-1,j-1})_{i,j}$).
\end{proof}

The following matrix properties are often useful to interpret 
relations between operators $\cS_{p,q}$ geometrically.

\begin{Proposition} \label{symm_pq}
Below, quantities $p,q\in\C$ are understood to be taken 
so that involved rational functions and operators are defined.
\begin{enumerate}[label=(\roman*),font=\upshape]
\item
$\sS_{p,q}=\sS[\eta,\eta']$ if and only if 
$\sS_{q,p}=\sS[\eta' \omega^2,\eta\omega]=\sJ\,\sS[\eta',\eta]$.
\item $\sS_{p,q}=\sJ \,\sS_{p_1,q_1}=\sJ^2\, \sS_{p_2,q_2}$, \\
where $(p_1,q_1)=\left(\frac{q(p-1)}{2pq-p-q}, 1-p\right)$, 
$(p_2,q_2)=\left(1-q, \frac{p(q-1)}{2pq-p-q}\right)$.
\item $\sS_{p,q}+\sS_{p',q'}=\frac23(\sI+\sJ+\sJ^2)$,
where $(p',q')=\left(\frac{2-3p+pq}{1+3q-4pq}, \frac{2-3q+pq}{1+3p-4pq}\right)$.
\end{enumerate}
\end{Proposition}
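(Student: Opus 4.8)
The plan is to reduce everything to the diagonalization \eqref{eq4.9}, where all three operators become $3\times 3$ diagonal matrices in the Fourier basis, so that each assertion becomes an elementary identity among the eigenvalue pairs $(\eta,\eta')$. Throughout, I write $\eta=\eta_{p,q}$, $\eta'=\eta'_{p,q}$ and recall from Proposition~\ref{DiagActionSpq} that $\sS_{p,q}=W\,\mathrm{diag}(1,\eta',\eta)\,W^{-1}$ with $W$ the Vandermonde matrix. The key structural facts I will use are: $\sJ=\sS[\omega,\omega^2]$ (so $\sJ$ is diagonalized to $\mathrm{diag}(1,\omega^2,\omega)$ by $W$), products of the $\sS[\,\cdot\,]$ correspond to componentwise products of eigenvalue pairs, and the dictionary $p=p(\eta,\eta')$, $q=q(\eta,\eta')$ of Proposition~\ref{prop4.5}, which in particular gives $q(\eta',\eta)$ when the roles of $\eta,\eta'$ are swapped.

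For (i): swapping $a\leftrightarrow$ the cyclic pattern, one checks directly from \eqref{eq2.2} and \eqref{eq4.2} that interchanging $p$ and $q$ sends $t_{p,q}\mapsto t_{q,p}$ and then $(\eta_{q,p},\eta'_{q,p})=(\omega\,\eta'_{p,q},\omega^2\,\eta_{p,q})$; I would verify this by plugging $q\leftrightarrow p$ into the formula $\eta_{p,q}=(1+t_{p,q}\omega)\frac{p-q}{1-pq}$ and using $t_{q,p}=\frac{(q-1)(2p-1)}{q-p}$ together with the identities $\omega^2=\bar\omega$, $1+\omega+\omega^2=0$. Once that is in hand, $\sS_{q,p}=\sS[\omega\eta',\omega^2\eta]$, and since $\sS[\omega,\omega^2]=\sJ$ and the $\sS[\,\cdot\,]$ multiply componentwise, $\sS[\omega\eta',\omega^2\eta]=\sS[\omega,\omega^2]\,\sS[\eta',\eta]=\sJ\,\sS[\eta',\eta]$, giving the claim. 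For (ii): the point is that $\sJ^{-1}\sS_{p,q}=\sJ^2\,\sS_{p,q}$ is diagonalized to $\mathrm{diag}(1,\omega\eta',\omega^2\eta)$ (since $\sJ^2\mapsto\mathrm{diag}(1,\omega,\omega^2)$), hence equals $\sS[\omega^2\eta,\omega\eta']$; I then apply the rational dictionary of Proposition~\ref{prop4.5} to read off $(p_1,q_1)=(p(\omega^2\eta,\omega\eta'),q(\omega^2\eta,\omega\eta'))$ and simplify to the stated expressions $\left(\frac{q(p-1)}{2pq-p-q},\,1-p\right)$ using the original formulas $t_{p,q}=\frac{(p-1)(2q-1)}{p-q}$ etc.; the $\sJ^2$ case is entirely analogous with $\omega\leftrightarrow\omega^2$.

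For (iii): in the Fourier basis the identity $\sS_{p,q}+\sS_{p',q'}=\frac23(\sI+\sJ+\sJ^2)$ reads $\mathrm{diag}(1,\eta',\eta)+\mathrm{diag}(1,\mu',\mu)=\frac23\,\mathrm{diag}(3,0,0)$, i.e. simply $\eta+\mu=0$ and $\eta'+\mu'=0$ (note $\sI+\sJ+\sJ^2$ is diagonalized to $\mathrm{diag}(3,0,0)$ since $1+\omega+\omega^2=0$). So $(p',q')$ must be exactly $(p(-\eta,-\eta'),q(-\eta,-\eta'))$, and the remaining task is the algebraic simplification showing $p(-\eta_{p,q},-\eta'_{p,q})=\frac{2-3p+pq}{1+3q-4pq}$ and similarly for $q'$; this is a direct substitution using the definitions of $\eta,\eta'$ in terms of $p,q$ followed by clearing denominators.

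The routine but genuinely laborious part — and the only real obstacle — is the last step of each item: carrying out the explicit rational-function simplifications that turn $p(\omega^2\eta,\omega\eta')$ and $p(-\eta,-\eta')$ into the compact forms stated, keeping track of the relations $\omega^2+\omega+1=0$, $\omega^3=1$, and $1-pq$ in the denominators of $\eta,\eta'$. These are finite computations with no conceptual content; I would either present them compactly or simply assert that they follow by direct calculation, since the conceptual content is fully captured by the componentwise behaviour of eigenvalues under the Vandermonde change of basis. No step should present any difficulty beyond bookkeeping, so I expect the proof to be short once the diagonalization \eqref{eq4.9} is invoked.
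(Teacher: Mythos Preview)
Your approach is the same as the paper's: diagonalize via \eqref{eq4.9}, reduce each item to an identity of eigenvalue pairs, then translate back through the dictionary $p=p(\eta,\eta')$, $q=q(\eta,\eta')$ of Proposition~\ref{prop4.5}. The paper phrases (i) slightly more slickly---observing that $p(y,y')$ and $q(y,y')$ interchange under $(y,y')\mapsto(\omega^2 y',\omega y)$---but this is exactly your computation viewed from the other side.

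One bookkeeping slip to fix: from $\sJ(1,\omega,\omega^2)^T=\omega\,(1,\omega,\omega^2)^T$ one gets $\sJ=W\,\mathrm{diag}(1,\omega,\omega^2)\,W^{-1}=\sS[\omega^2,\omega]$, not $\sS[\omega,\omega^2]$; correspondingly $(\eta_{q,p},\eta'_{q,p})=(\omega^2\eta'_{p,q},\,\omega\,\eta_{p,q})$, matching the statement $\sS_{q,p}=\sS[\eta'\omega^2,\eta\omega]$. Your two swapped exponents cancel in the final identity $\sS_{q,p}=\sJ\,\sS[\eta',\eta]$, so the argument survives, but the intermediate line as written does not agree with the proposition.
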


\begin{proof}
(i) follows from the observation in Proposition \ref{prop4.5}
that $p(y,y')$ and $q(y,y')$ are interchanged to each other when
$(y,y')$ is replaced by $(\omega^2 y', \omega y)$.
The last equality 
\begin{equation} \label{NO2.1}
\sS[\eta' \omega^2,\eta\omega]=\sJ\,\sS[\eta',\eta]
\end{equation}
follows by a simple matrix computation (cf.\,\cite[(2.1)]{NO2}).
(ii) is just a translation of (\ref{NO2.1}) in terms of the original parameters $(p,q)$. 
(iii): The variable $(p',q')$ is defined so that 
$\sS_{p,q}=\sS[\eta,\eta']$ and $\sS_{p',q'}=\sS[-\eta,-\eta']$.
The identity reflects a simple consequence of 
$\frac12(\sS[\eta,\eta']+\sS[-\eta,-\eta'])=W\,\mathrm{diag}(1,0,0)\,W^{-1}
=\frac13(\sI+\sJ+\sJ^2)$.
\end{proof}

\begin{Example}
In the following table, we summarize examples of $\cS_{p,q}$ with special 
parameters $p,q, \xi_{p,q}, \eta_{p,q}$ and $\eta_{p,q}'$: 
{\small 
$$
\begin{array}{ll@{\quad}|@{\quad}lll@{\quad}|@{\quad}l}
 p & q & \xi_{p,q} & \eta_{p,q} & \eta'_{p,q} & \text{$\cS_{p,q}$} \\
 \hline
 1 & \ast(\ne 1) & 1 & 1 & 1 &  (\ref{cyclic_perm}): \text{\tiny identity} \rule{0pt}{10pt}\\
 \ast(\ne 1) & 1 & \omega & \omega^2 & \omega &  (\ref{cyclic_perm}):\text{\tiny cyclic permutation} \\
 0 & 0    & \omega^2 & \omega & \omega^2 &  (\ref{cyclic_perm}):\text{\tiny cyclic permutation} \\
 \hline 
 0 & 1-s  & {\frac{s+(1-s)\omega}{s\omega+(1-s)}} 
& s\omega+(1-s)\omega^2 
& s\omega^2+(1-s)\omega 
& \text{Ex.\ref{classical_nest}}: \text{\it $s$-medial} \text{ (\cite{H09},\cite{NO03})} 
\rule{0pt}{12pt}  \\[5pt]
 s & 1-s  & {\frac{\omega+s}{1+\omega\,s}} & {\frac{1-2s}{1-s+s^2}}(\omega^2+\omega\,s) 
 & {\frac{1-2s}{1-s+s^2}}(\omega+\omega^2\,s) 
 & \text{Ex.\ref{classical_nest}}: \text{\it $s$-Routh} \text{ (\cite{H09},\cite{NO03})} \\[5pt]
 {\frac{2s}{s+3}} & {\frac{s}{2s-3}} & {\frac{s+\omega}{s+\omega^2}} & s+\omega & s+\omega^2 
 &\text{Ex.\ref{classical_nest}}:  \text{\it $s$-median} \text{ (\cite{H09},\cite{NO2})} \\[2pt]
 \hline
 0 & {\frac{1-\omega^2}{3}} & 0 & 0 & -1 & \text{Ex.\ref{NapoleonTh}}:\text{\it Napoleon}  
 \rule[0pt]{0pt}{13pt}
 \\[5pt]
 0 & {\frac{1-\omega}{3}} & \infty & -1 & 0 & \text{Ex.\ref{NapoleonTh}}: \text{\it Napoleon (reversed)}
\\[2pt]  
\hline
 r (\ne 2) & {\frac{1}{2}} & 1 & {\frac{1-2r}{r-2}} & {\frac{1-2r}{r-2}} & \text{Prop.\,\ref{cor3.4} (iii)} 
 \rule{0pt}{12pt} \\
 {\frac{1}{2}} & r(\ne 2) & \omega & {\frac{1-2r}{r-2}\,\omega^2} 
& {\frac{1-2r}{r-2}\,\omega} &\rule{0pt}{12pt}\text{Prop.\,\ref{cor3.4} (iii)} \\[5pt]
q(\ne -1) & q(\ne -1) & \omega^2 & {\frac{1-2q}{1+q}\,\omega} & {\frac{1-2q}{1+q}\,\omega^2} & \text{Prop.\,\ref{cor3.4} (iii)} \\
\hline
 {\frac{1+2e^{i\,\theta}}{2+e^{i\,\theta}}} & {\frac{1}{2}} & 1 & { e^{i \theta}} & {e^{i \theta}} & \text{$\theta$-rotation}\rule{0pt}{12pt} \\[5pt]
\end{array}
$$
}
\end{Example}


\section{Area preserving operators $\cSap{\theta_x,\theta_y,\theta_y'}$}

We once again recall the notations $\rho:=e^{2 \pi i/6}$, $\omega:=e^{2 \pi i/3}$.
Given $\cS_{p,q}$, we have the secondary parameters
$(\xi_{p,q},\eta_{p,q},\eta_{p,q}')$ with $\eta_{p,q}=\xi_{p,q}\eta_{p,q}'$ as
studied in the preceding sections.
If $|\xi_{p,q}|=1$, then it operates on the moduli disc of similarity classes
by a bijective rotation, and 
if $|\eta_{p,q}|=|\eta'_{p,q}|=1$, then $\area(\cS_{p,q}(\Delta))=\area(\Delta)$ 
for every triangle triple $\Delta$
(cf.\,(\ref{eq2.3}), Proposition \ref{GeneralAreaFormula}).

We begin with:

\begin{Proposition}
The operator $\cS_{p,q}$ has a finite period $N$ if and only if
$\xi_{p,q}^N=\eta_{p,q}^N={\eta'_{p,q}\!\!{}^N}=1$, 
In particular, such $\cS_{p,q}$ is normal and preserves areas of triangles.
\end{Proposition}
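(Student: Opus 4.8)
The key observation is that, by Proposition \ref{prop4.5} (or directly from (\ref{eq4.9})), the operator $\cS_{p,q}$ is conjugate via the Vandermonde matrix $W$ to the diagonal matrix $\mathrm{diag}(1,\eta'_{p,q},\eta_{p,q})$. Hence for any $N\in\N$ the power $\sS_{p,q}^N$ is conjugate to $\mathrm{diag}(1,\eta'_{p,q}\!\!{}^N,\eta_{p,q}^N)$, so $\sS_{p,q}^N=\sI$ if and only if $\eta_{p,q}^N={\eta'_{p,q}\!\!{}^N}=1$. This reduces the ``finite period'' condition on the operator to the two scalar conditions on $\eta_{p,q}$ and $\eta'_{p,q}$. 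I would state this conjugation as the first step.

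Next I would note that the third condition $\xi_{p,q}^N=1$ is then automatic: since $\xi_{p,q}=\eta_{p,q}/\eta'_{p,q}$ (recorded just after (\ref{eq4.2})), once $\eta_{p,q}^N={\eta'_{p,q}\!\!{}^N}=1$ we get $\xi_{p,q}^N=\eta_{p,q}^N/{\eta'_{p,q}\!\!{}^N}=1$. Conversely, if all three $N$-th power conditions hold they in particular force $\eta_{p,q}^N={\eta'_{p,q}\!\!{}^N}=1$, which by the conjugation gives $\sS_{p,q}^N=\sI$, i.e. period dividing $N$. So the equivalence in the statement follows once one is a little careful about what ``finite period $N$'' means (period dividing $N$ versus exact period $N$ — either reading works for the displayed biconditional, and I would just make the convention explicit).

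For the ``in particular'' clause: $\eta_{p,q}^N=1$ and ${\eta'_{p,q}\!\!{}^N}=1$ immediately give $|\eta_{p,q}|=|\eta'_{p,q}|=1$, hence $\cS_{p,q}$ preserves areas by Proposition \ref{GeneralAreaFormula} (iii). Likewise $|\xi_{p,q}|=|\eta_{p,q}|/|\eta'_{p,q}|=1$, equivalently $\mathrm{Im}(t_{p,q})=0$, so $\cS_{p,q}$ is normal in the sense of Definition \ref{def1.2} (via the characterization $\mathrm{Im}(t_{p,q})=0\Leftrightarrow|\xi_{p,q}|=1$ noted after (\ref{eq2.3})).

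There is essentially no hard step here; the only thing to be slightly careful about is the simultaneous diagonalizability — one must note $\sS_{p,q}\in\C\sI+\C\sJ+\C\sJ^2$ and that $\sI,\sJ,\sJ^2$ are simultaneously diagonalized by $W$, so $W^{-1}\sS_{p,q}W$ is genuinely diagonal with the stated eigenvalues, and that $W$ is invertible (Vandermonde in the distinct cube roots of unity $1,\omega,\omega^2$). Given that, every assertion is a one-line consequence.
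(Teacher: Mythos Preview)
Your argument is correct and is exactly the reasoning the paper has in mind: the diagonalization (\ref{defSyy}) reduces periodicity of $\sS_{p,q}$ to $\eta_{p,q}^N={\eta'_{p,q}}^N=1$ (with $\xi_{p,q}^N=1$ then automatic from $\xi_{p,q}=\eta_{p,q}/\eta'_{p,q}$), and normality together with area-preservation follow from $|\xi_{p,q}|=|\eta_{p,q}|=|\eta'_{p,q}|=1$ via the characterization after (\ref{eq2.3}) and Proposition \ref{GeneralAreaFormula} (iii). The paper's own proof simply records that this ``follows immediately from our discussions above and in \S 5,'' so you have merely made those implicit steps explicit.
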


\begin{proof} This follows immediately from our discussions above and 
in \S 5.
\end{proof}

\begin{Definition}
We call $\cS_{p,q}$ 
an {\it area-preserving normal operator} 
if the associated parameters satisfy 
$|\xi_{p,q}|=|\eta_{p,q}|=|\eta'_{p,q}|=1$.
\end{Definition}

In Proposition \ref{prop4.5}, we considered two rational functions 
$p(y,y')$, $q(y,y')\in\C(y,y')$ that represent the parameter 
$(p,q)$ by the new parameter $(\eta, \eta')$.
This is natural because $\eta,\eta'$ are eigenvalues of $\cS_{p,q}$. 
Meanwhile, when we restrict our attention to the area-preserving operators,
we find the parameter $(\xi=\eta/\eta',\eta)$ still deserves to be utilized.
So we shall define two rational functions $\bp(x,y)$, $\bq(x,y)$ by
\begin{align}
\bp(x,y)&:=p(y,\frac{y}{x})=\rho \cdot\frac{xy+x+y}{xy+2\rho x+\rho^2y},   \\
\bq(x,y)&:=q(y,\frac{y}{x})= \rho^{-1}\cdot \frac{xy-\rho x-\rho^{-1}y}{xy-2x+y}.
\end{align}
In fact, we find the following algebraic functional equations 
\begin{align}
&\bp(x,y)=\bq(\omega x^{-1}, \omega^2 x^{-1}y), \quad
\bq(x,y)=\bp(\omega x^{-1}, \omega^2 x^{-1}y);    \label{eq5.5} \\
&\bp(y,x)=\frac{\bp(x,y)(\bq(x,y)-1)}{\rho(\bp(x,y)-1)\bq(x,y)-(\bp(x,y)-\bq(x,y))};  \label{eq4.11}\\
&\bq(y,x)=\frac{\bq(x,y)-1}{(1+\rho)\bq(x,y)-1}. \label{eq4.12}
\end{align}
Moreover, on the {\it normal area-preserving torus}
\begin{equation}
\cT:=S^1\times S^1=\{(x,y)\in\C; |x|=|y|=1\},
\end{equation}
we have a closer look at the functions 
$\bp(x,y)$ and $\bq(x,y)$ with their extra-symmetry properties:
namely, if we consider $\bp(x,y)$ and $\bq(x,y)$ as $\C$-valued partial 
functions on $\cT$, then,
\begin{enumerate}[label=(\roman*),font=\upshape]
\item $\bp(x,y)$ is defined on $\cT-\{(\omega,\omega^2)\}$.
\item $\bq(x,y)$ is defined on $\cT-\{(1,1)\}$. 
\end{enumerate}
On these regions of definition, they have functional equations:
\begin{equation}
\bp(x,y)+\overline{\bp(\omega^2x^{-1},\omega y^{-1})}=1,\quad
\bq(x,y)+\overline{\bq(x^{-1},y^{-1})}=1.
\label{eq4.10}
\end{equation}
The above functional equations 
(\ref{eq5.5})-(\ref{eq4.10})
can be verified by 
straightforward calculations (assisted by symbolic algebraic system, once they
are found).

One possible way to see a source reason behind the above symmetric 
formulas may be connecting $\bq(x,y)$ with
the following simple rational function
\begin{equation}
R(x,y):=\frac{2xy-x-y}{\sqrt{3}(y-x)}.
\end{equation}
In fact, one can interpret a modified function
\begin{equation}
Q(x,y):=i\bigl(\bq(x,y)-\frac{1}{2}\bigr)
\quad\text{   as to be:   }\quad
Q(x,y)=\frac{1}{2}\left(\frac{\sqrt{3}R(x,y)-1}{R(x,y)+\sqrt{3}}\right).
\end{equation}
The identity (\ref{eq4.12}) and the first formula of (\ref{eq4.10}) are consequences 
of the obvious symmetry: 
\begin{equation} \label{eqR}
R(x,y)+R(y,x)=0, \quad R(x^{-1},y^{-1})=\overline{R(x,y)}
\qquad (|x|=|y|=1),
\end{equation}
while (\ref{eq4.11}) and
the second formula of  (\ref{eq4.10})
are their translations through (\ref{eq5.5}).

\begin{Corollary}
The values of $\bq(x,y)$ on $\cT$ are determined from
those values $\bq(e^{i\theta},e^{i\phi})$ on the region 
$\cT_0:=\{(\theta,\phi)
\mid 0<\theta<2\pi,\, 0\le\phi\le\theta,\,
\theta+\phi\le 2\pi\}$.
\end{Corollary}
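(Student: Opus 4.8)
The plan is to use the two functional equations for $\bq$ on $\cT$ established above, namely
$\bq(y,x)=\dfrac{\bq(x,y)-1}{(1+\rho)\bq(x,y)-1}$ (equation (\ref{eq4.12})) and
$\bq(x,y)+\overline{\bq(x^{-1},y^{-1})}=1$ (the second formula of (\ref{eq4.10})),
to show that the value of $\bq$ at an arbitrary point $(e^{i\theta},e^{i\phi})\in\cT$ can be
expressed in terms of its value at a point lying in the fundamental region $\cT_0$. First I would
set up a convenient normalization of angles: writing a point of $\cT$ as $(e^{i\theta},e^{i\phi})$
with $\theta,\phi\in[0,2\pi)$, the constraints defining $\cT_0$ are $0<\theta<2\pi$,
$0\le\phi\le\theta$ and $\theta+\phi\le 2\pi$. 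A point not in $\cT_0$ violates one of the latter
two inequalities (the degenerate locus where $\bq$ is undefined, $(1,1)$, is excluded throughout).

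The key steps are two reduction moves. \emph{Swap move:} the equation (\ref{eq4.12}) relates
$\bq(e^{i\theta},e^{i\phi})$ to $\bq(e^{i\phi},e^{i\theta})$ by an explicit linear fractional
transformation with coefficients independent of $(x,y)$; hence if $\phi>\theta$ we may interchange
the two angles at the cost of applying a fixed M\"obius map, landing in the half where
$\phi\le\theta$. \emph{Inversion move:} the second identity in (\ref{eq4.10}) relates
$\bq(e^{i\theta},e^{i\phi})$ to $\overline{\bq(e^{-i\theta},e^{-i\phi})}
=\overline{\bq(e^{i(2\pi-\theta)},e^{i(2\pi-\phi)})}$; under $(\theta,\phi)\mapsto(2\pi-\theta,2\pi-\phi)$
the quantity $\theta+\phi$ becomes $4\pi-(\theta+\phi)$, so a point with $\theta+\phi>2\pi$ is sent
to one with angle-sum $<2\pi$. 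Thus, starting from any $(e^{i\theta},e^{i\phi})\in\cT$ with
$\bq$ defined, one applies the inversion move if needed to enforce $\theta+\phi\le 2\pi$, then the
swap move if needed to enforce $\phi\le\theta$; I would check that the swap move does not destroy
the condition $\theta+\phi\le 2\pi$ (it is symmetric in $\theta,\phi$, so it preserves the
angle-sum), and that the inversion move followed by a possible swap terminates. Since each move is
invertible and expresses the original value of $\bq$ through a fixed formula in the new value, the
value of $\bq$ on all of $\cT$ is determined by its restriction to $\cT_0$.

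The main obstacle I anticipate is purely bookkeeping at the boundary: one must track carefully what
happens on the walls $\phi=\theta$, $\theta+\phi=2\pi$, and near the excluded point $(1,1)$
(equivalently $\theta=\phi=0$), making sure the two moves can always be combined to reach $\cT_0$
without running into the indeterminacy locus of $\bq$ or of the M\"obius factors, and that a point
already in $\cT_0$ is left essentially alone. A clean way to finish is to note that the group
generated by the swap $(\theta,\phi)\mapsto(\phi,\theta)$ and the inversion
$(\theta,\phi)\mapsto(2\pi-\theta,2\pi-\phi)$ acts on the square $[0,2\pi]^2$ with $\cT_0$ as a
fundamental domain (it is a dihedral action of order $4$ on the torus, with $\cT_0$ one of its four
triangular tiles), so every orbit meets $\cT_0$; combining this with the explicit transformation
rules for $\bq$ under each generator yields the corollary. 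This last verification that $\cT_0$ is
genuinely a fundamental domain for the $\langle\text{swap},\text{inversion}\rangle$-action is the
only nontrivial point, and it is elementary planar geometry.
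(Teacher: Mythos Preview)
Your proposal is correct and follows essentially the same approach as the paper: both arguments rest on the two involutions ``swap'' $(\theta,\phi)\mapsto(\phi,\theta)$ and ``inversion'' $(\theta,\phi)\mapsto(2\pi-\theta,2\pi-\phi)$, which generate a Klein four-group for which $\cT_0$ is a fundamental triangle, together with explicit transformation rules for the function under each generator. The only difference is cosmetic: the paper routes the argument through the auxiliary function $R(x,y)$, whose symmetries $R(y,x)=-R(x,y)$ and $R(x^{-1},y^{-1})=\overline{R(x,y)}$ (equation~(\ref{eqR})) are simpler than the M\"obius-type relations (\ref{eq4.12}) and (\ref{eq4.10}) you use directly on $\bq$; since $\bq$ is a fixed M\"obius function of $R$, this amounts to the same reduction with less algebraic bookkeeping.
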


Note that the identity (\ref{eq5.5}) tells how the function 
$\bq(x,y)$ determines $\bp(x,y)$.

\begin{proof}
This follows from observing values of $R(x,y)$ and 
the symmetric relations (\ref{eqR}):
The values of $R(e^{i\theta},e^{i\phi})$ are determined on 
$\cT_0$. Note that on the boundary of $\cT_0$, 
the value of $R(x,y)$ is undetermined only at $\theta=\phi=0$;
elsewhere it takes
$\frac{i}{\sqrt{3}}\tan\frac{\theta}{2}$ 
on $\{(\theta,\phi)\mid 0<\phi=2\pi-\theta\le \pi\}$,
constant $1/\sqrt{3}$ on 
$\{(\theta,0)\mid 0<\theta<2\pi\}$
and $\infty$ on 
$\{(\theta,\phi)\mid 0<\theta=\phi\le\pi\}$.
\end{proof}

At the end of this article, let us give several illustrations on normal 
area-preserving operations on triangles. 
For convenience, let us identify the normal area-preserving torus as:
$$
\cT\cong 
\{(\theta_x, \theta_y, \theta_y')\in(\R/\Z)^3\mid \theta_x=\theta_y-\theta_y'\}.
$$
via $(x,y)\leftrightarrow (e^{2\pi i\theta_x},e^{2\pi i\theta_y})$.

\begin{Definition}
Define
$$
\cSap{\theta_x,\theta_y,\theta_y'}:=\cS[e^{2\pi i\theta_y},e^{2\pi i\theta_y'}].
$$ 
\end{Definition}

\begin{Example}
Here are two orbits of area-preserving operators 
$\cSap{\theta_x,\theta_y,\theta_y-\theta_x}$ of order 20 and 28
starting from the initial triangle $\Delta=(0,\,1,\,0.7+0.8\,i)$: 

\begin{figure}[h]
\begin{center}
\begin{tabular}{cc}
 \ig{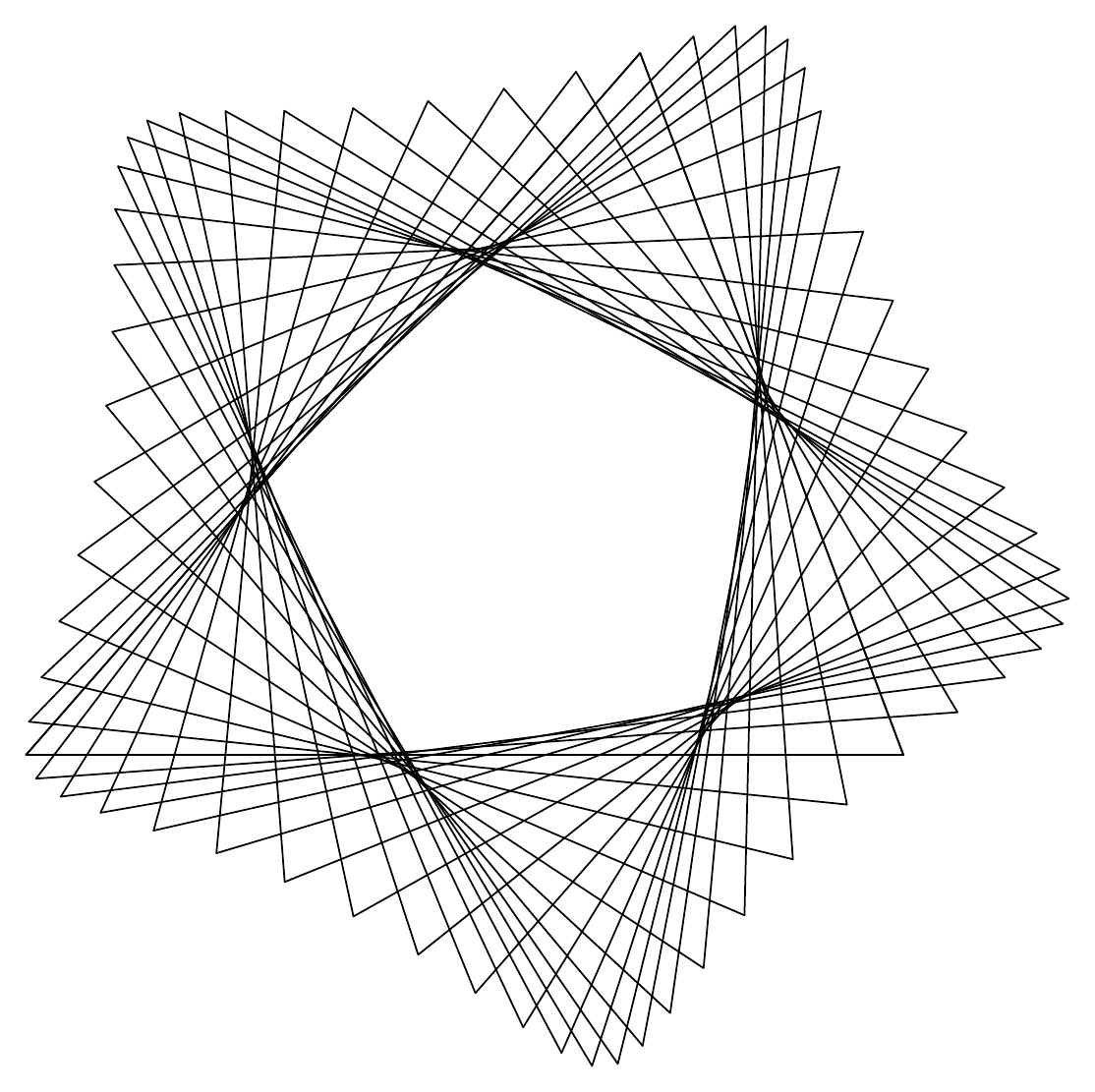} & 
 \ig{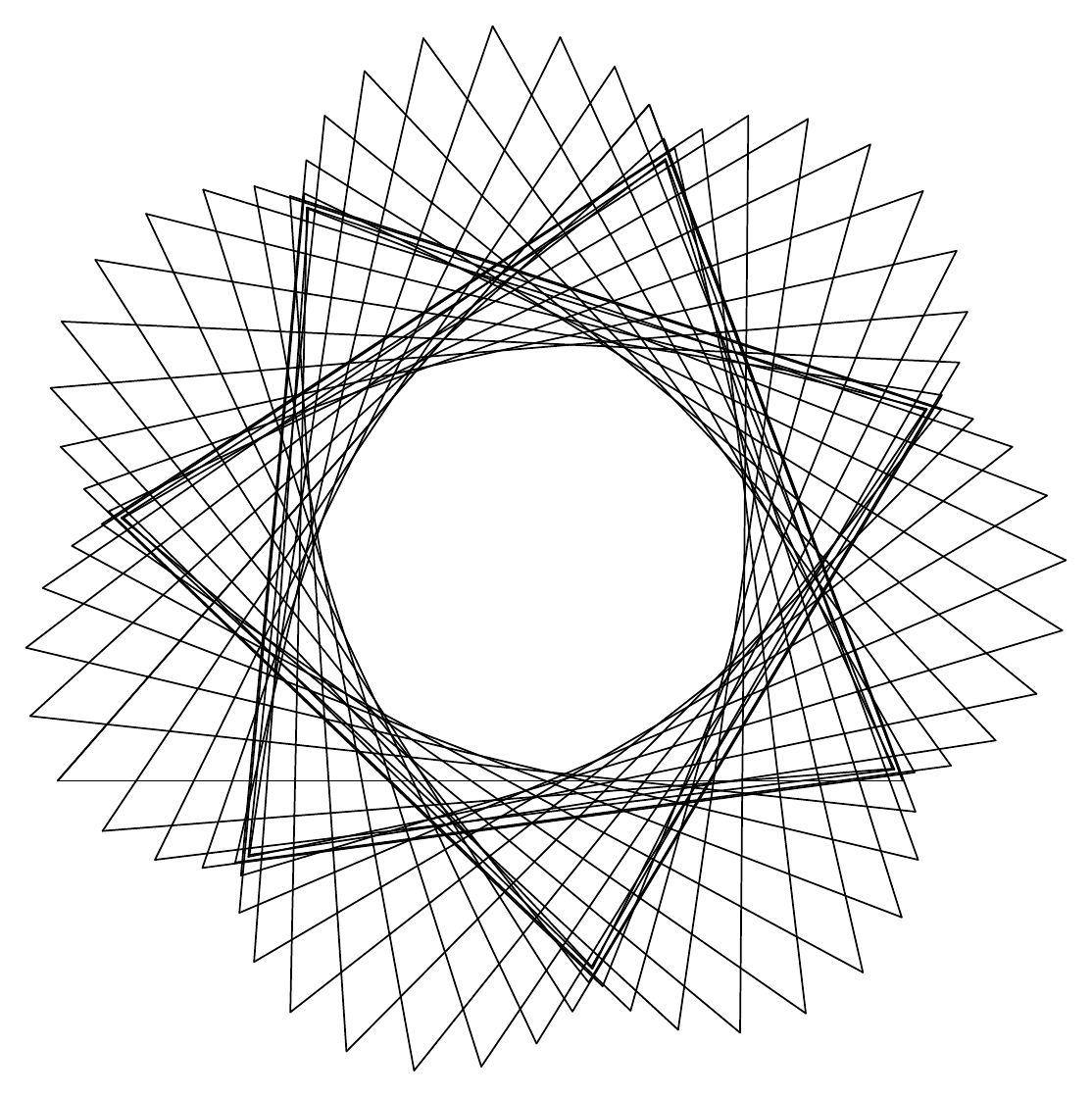} 
\\
 \text{$ \{\cSap{\frac{1}{4},\frac{1}{5},\frac{1}{5}-\frac{1}{4}}^n(\Delta)\}_{n\in\Z}  $} & 
 \text{$\{ \cSap{\frac{1}{4},\frac{1}{7},\frac{1}{7}-\frac{1}{4}}^n(\Delta)\}_{n\in\Z}$} 
\end{tabular}
\end{center}
\end{figure}
\end{Example}

%
\begin{Example} 
Simple rotations through the angle $\theta$ are realized by $\cSap{0,\theta,\theta}$.
Here are two examples starting from $\Delta=(0,\,1,\,0.7+0.3\,i)$:
\begin{figure}[h]
\begin{center}
\begin{tabular}{cc}
\ig{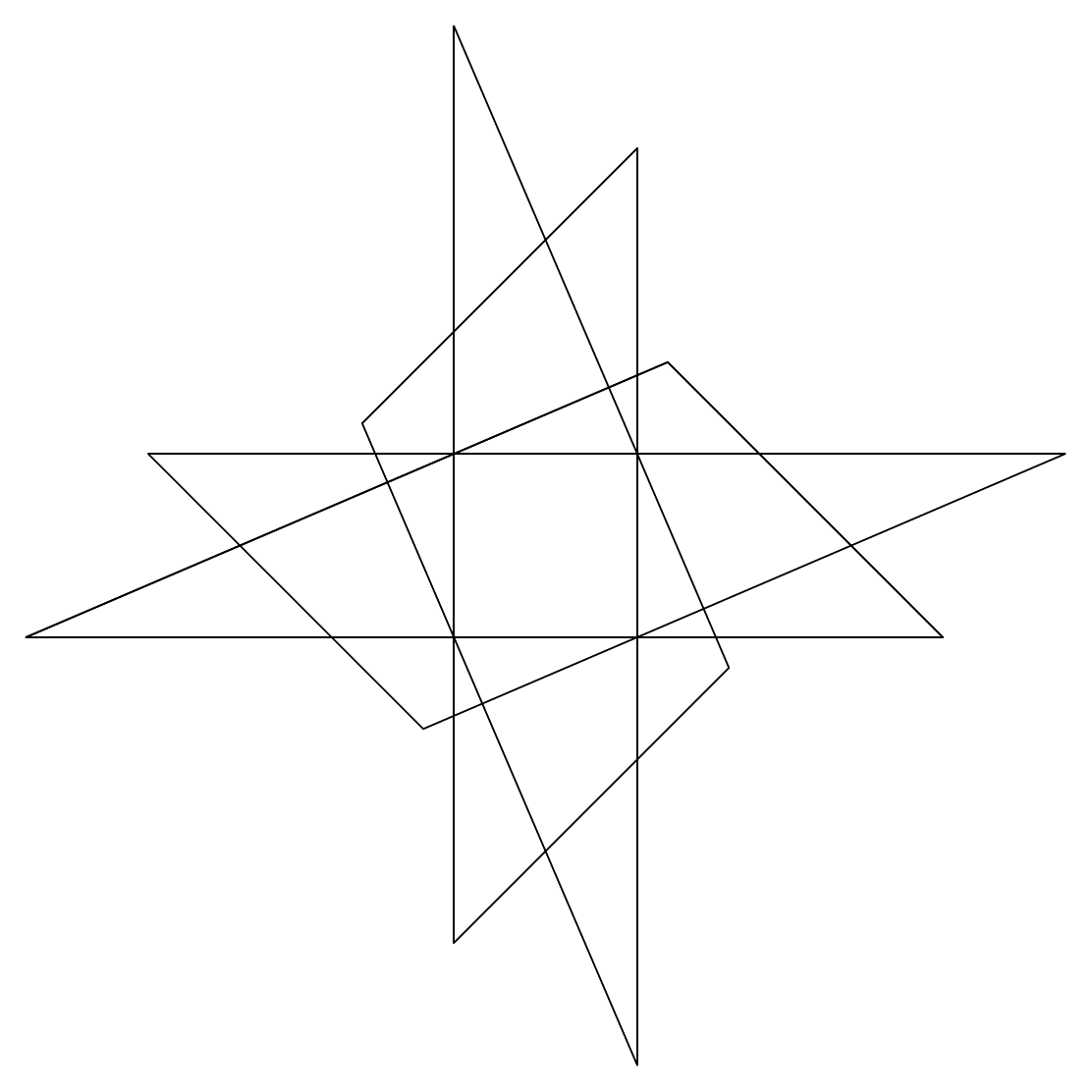} & 
 \ig{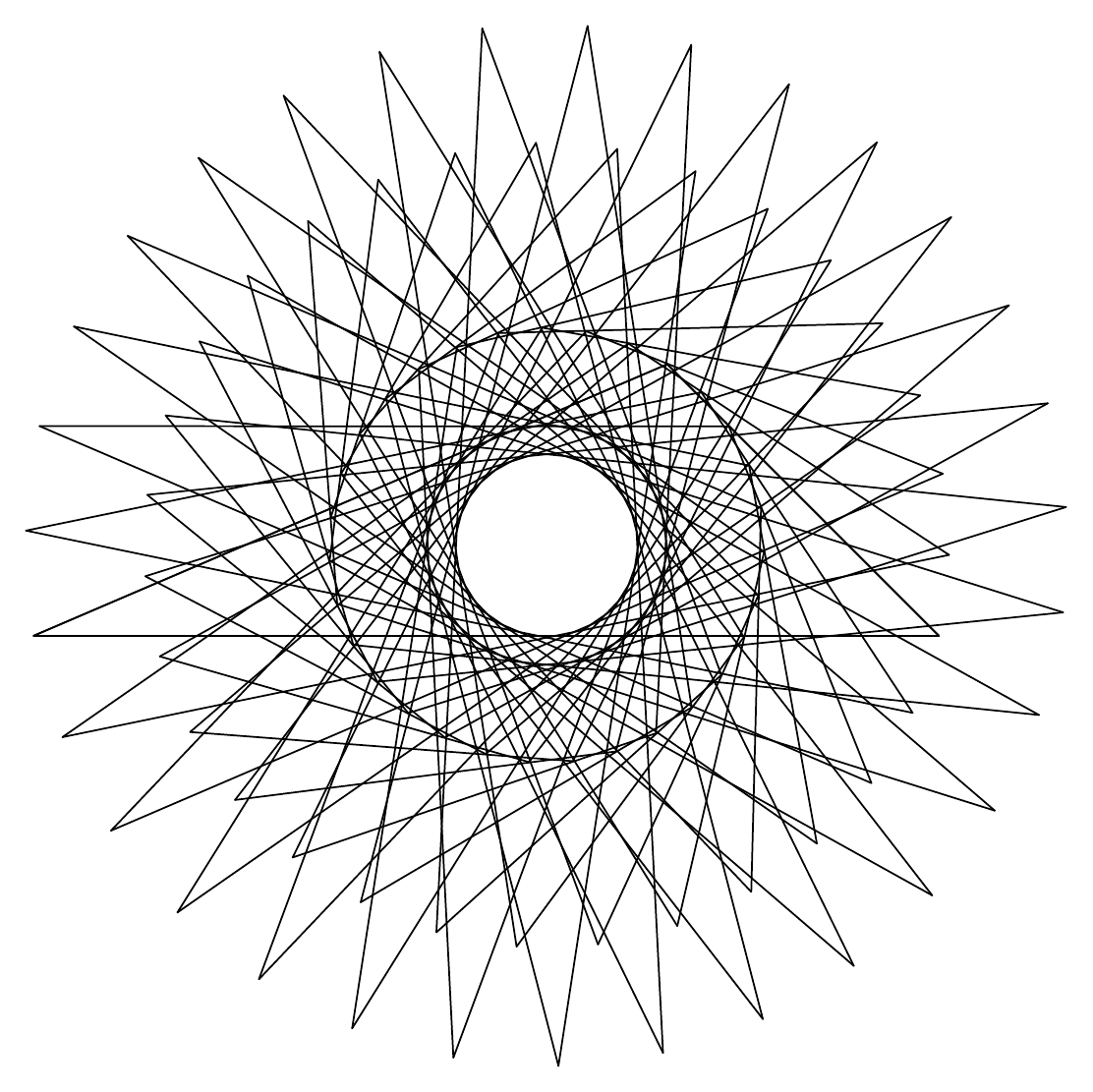} \\
 \text{$\{\cSap{0,\frac{1}{4},\frac{1}{4}}^n(\Delta)\}_{n\in\Z} $} & 
 \text{$\{\cSap{0,\frac{1}{31},\frac{1}{31}}^n(\Delta)\}_{n\in\Z} $} 
\end{tabular}
\end{center}
\end{figure}
\end{Example}

\begin{Example} 
Trefoil-looking figure appears in the orbits of $\cSap{\theta,0,-\theta}$.
Starting from the initial triangle $\Delta=(0,\,1,\,0.7+0.3\,i)$, we have
the following two sample illustrations:
\begin{figure}[h]
\begin{center}
\begin{tabular}{cc}
 \ig{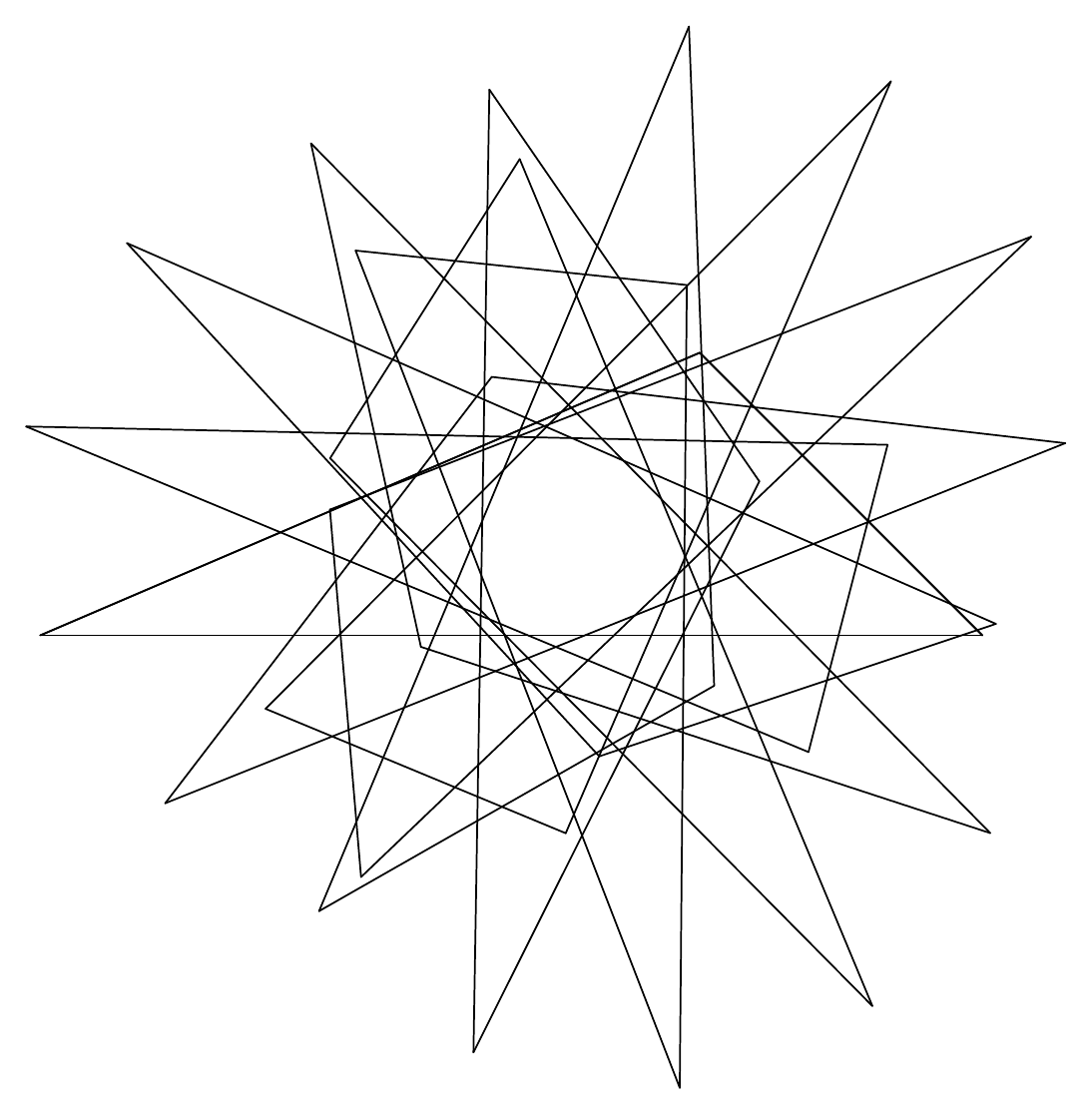} & 
 \ig{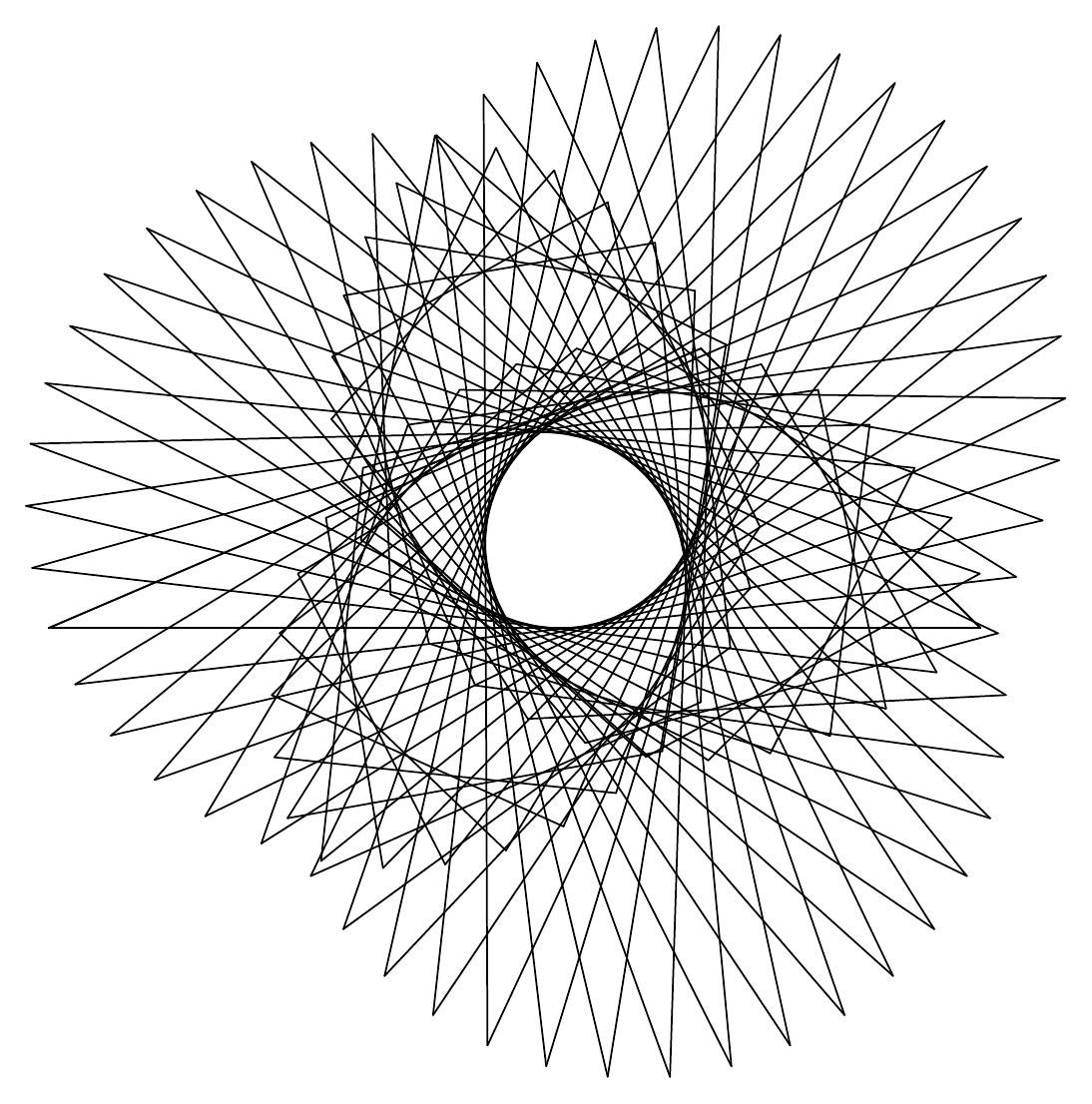} \\
 \text{$\{\cSap{\frac{1}{11},0,-\frac{1}{11}}^n(\Delta)\}_{n\in\Z} $} & 
 \text{$\{\cSap{\frac{1}{37},0,-\frac{1}{37}}^n(\Delta)\}_{n\in\Z} $} 
\end{tabular}
\end{center}
\end{figure}
\end{Example}

\pagebreak
\begin{Example}  
Here are two orbits of $\cSap{\theta,\theta,0}$-images starting from 
$\Delta=(0,\,1,\,0.7+0.5\,i)$
in the cases $\theta=\frac15,\frac1{31}$.

\begin{table}[h]
\begin{center}
\begin{tabular}{cc}
 \ig{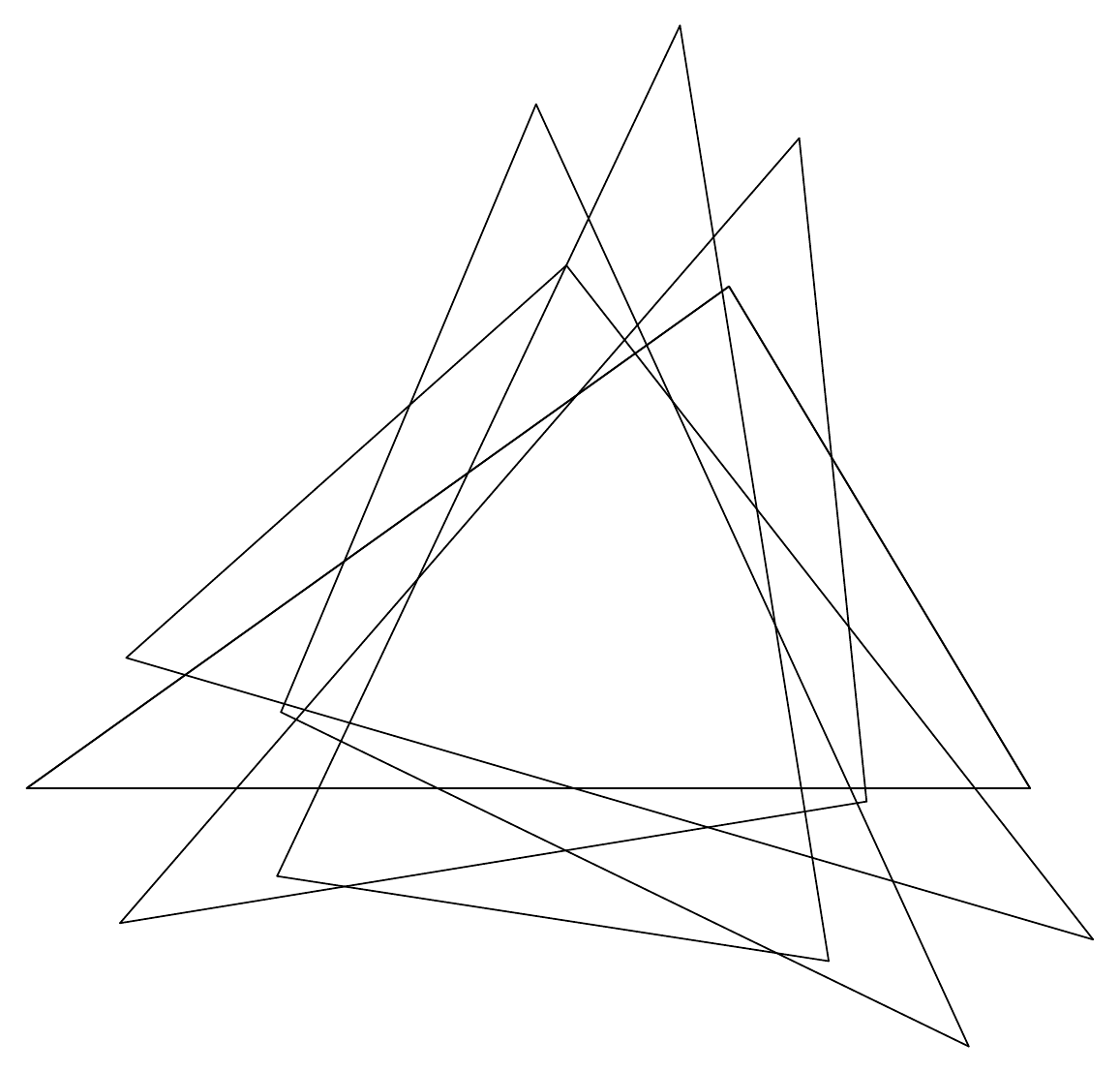} & 
 \ig{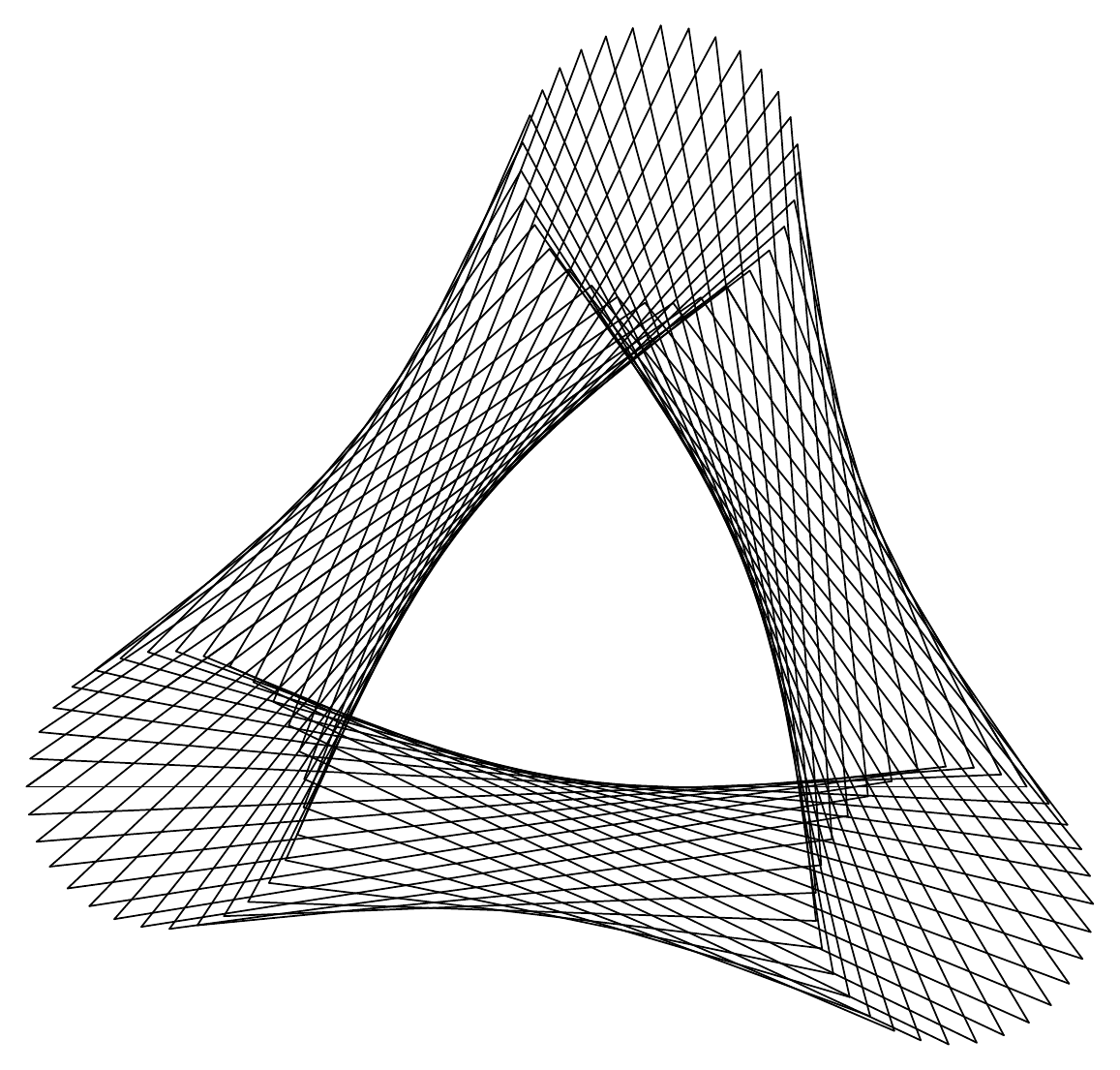} \\
 \text{\{$\cSap{\frac{1}{5},\frac{1}{5},0}^n(\Delta)\}_{n\in\Z} $} &
 \text{\{$\cSap{\frac{1}{31},\frac{1}{31},0}^n(\Delta)\}_{n\in\Z} $} 
\end{tabular}
\end{center}
\end{table}
\end{Example}

\ifx\undefined\bysame
\newcommand{\bysame}{\leavevmode\hbox to3em{\hrulefill}\,}
\fi

\end{document}